\numberwithin{equation}{section}
\newtheorem{theorem}{Theorem}[section]
\newtheorem{corollary}[theorem]{Corollary}
\newtheorem{lemma}[theorem]{Lemma}
\newtheorem{proposition}[theorem]{Proposition}
\newtheorem{definition}[theorem]{Definition}
\newtheorem{remark}[theorem]{Remark}
\newcommand{\N}{\mathbb{N}}
\newcommand{\be}{\begin{equation}}
\newcommand{\ee}{\end{equation}}
\newcommand{\R}{\mathbb{R}}
\renewcommand{\d}{{\mathrm d}}
\newcommand{\Pro}{\mathcal{P}}
\newcommand{\Prac}{\mathcal{P}^{ac}}
\newcommand{\mm}{\mathfrak m}
\newcommand{\MM}{\mathcal M}
\newcommand{\XX}{\mathcal X}
\newcommand{\YY}{\mathcal Y}
\newcommand{\restr}[1]{\lower3pt\hbox{$|_{#1}$}}
\newcommand{\diam}{\mathop{\rm diam}\nolimits}
\newcommand{\spt}{\mathop{\rm spt}\nolimits}
\newcommand{\KL}{\operatorname{KL}\nolimits}
\begin{document}

\title
{Multi-marginal Entropy-Transport with repulsive cost}

\author{Augusto Gerolin}
\address{Vrije Universiteit Amsterdam\\
         Department of Theoretical Chemistry \\
         FEW, De Boelelaan 1083 \\
         1081HV Amsterdam \\
         The Netherlands}
\email{augustogerolin@gmail.com}		 
\author{Anna Kausamo}\author{Tapio Rajala}
\address{University of Jyvaskyla\\
         Department of Mathematics and Statistics \\
         P.O. Box 35 (MaD) \\
         FI-40014 University of Jyvaskyla \\
         Finland}
\email{anna.m.kausamo@jyu.fi}
\email{tapio.m.rajala@jyu.fi}

\thanks{The authors acknowledge the support of the Academy of Finland, projects no. 274372, 284511, 312488, and 314789. A.G. also acknowledges funding by the European Research Council under H2020/MSCA-IF
\textit{``OTmeetsDFT"} [grant ID: 795942]. }
\subjclass[2000]{}
\date{\today}

\begin{abstract}
In this paper we study theoretical properties of the entropy-transport functional with repulsive cost functions. We provide sufficient conditions for the existence of a minimizer in a class of metric spaces and prove the $\Gamma$-convergence of the entropy-transport functional to a multi-marginal optimal transport problem with a repulsive cost.
 We also prove the entropy-regularized version of the Kantorovich duality.\end{abstract}

\maketitle

\section{Introduction}

We consider the following multi-marginal entropy-transport problem 
\be\label{ETIntro}
I_{\varepsilon}[\rho] = \inf_{\gamma\in\Pi^{\mathrm{sym}}_N(\rho)}( C_0[\gamma] + \varepsilon E[\gamma]),
\ee
where $C_0[\gamma] = \int_{X^N} c\,\d\gamma$ is the transportation cost related to a cost function $c$,  $E[\gamma]$ is the entropy, and $\varepsilon \ge 0$ is a parameter, see Section \ref{sec:entropic} for details.
We consider the setting where $(X,d,\mm)$ is a Polish measure space and $\rho\mm \in \Prac(X)$ is an absolutely continuous probability measure with respect to the reference measure $\mm$. An element $\gamma \in \Pi^{\mathrm{sym}}_N(\rho)$ is called a symmetric coupling (or transport plan), that is, a symmetric probability measure in $X^N$ having all marginals equal to $\rho\mm$.

We are interested in a class of repulsive cost functions $c\colon X^N\to\R\cup\lbrace +\infty\rbrace$ of the form
\[
c(x_1,\ldots, x_N)=\sum_{1\le i<j\le N}f(d(x_i,x_j)),\quad \text{ for all } \, (x_1,\ldots,x_N)\in X^N.
\]
We assume $f\colon ]0,\infty[\to \R$ to be a continuous and decreasing function that approaches $+\infty$ if $d(x_i,x_j)\to 0$. Among the examples of such cost functions we have the Coulomb cost $f(z) = 1/\vert z\vert$, the Riesz cost  $f(z) = 1/\vert z\vert^s, n \geq s\geq \max\lbrace n-2, 0\rbrace$ (in $\R^n$) and the logarithmic cost $f(z) = -\log(\vert z\vert)$. We observe that when $\varepsilon=0$, this entropy-transport problem  reduces to the classical multi-marginal optimal transport problem with repulsive costs \cite{BuDePGor,CFK,CFK17,DMaGerNen}.

The motivation of this paper comes from both theory and numerics. For repulsive cost functions, the entropy term in \eqref{ETIntro} plays a role of a \textit{regularizer} to compute numerically a solution $\gamma$ of the multi-marginal optimal transport problem $I_{0}[\rho]$, see \cite{BenCaNen}. Numerical experiments suggest that when the regularization parameter $\varepsilon$ goes to $0$, the minimizer $\gamma_{\varepsilon}$ converges to a minimizer of $I_{0}[\rho]$ having minimal entropy among the minimizer of $I_{0}[\rho]$. 

From a theoretical viewpoint, this type of a functional has direct relevance in Density Functional Theory.  By choosing carefully the parameter $\varepsilon$, the functional \eqref{ETIntro} provides a lower bound for the Hohenberg-Kohn functional in Density Functional Theory \cite{NenThesis, DMaGerNenGorSei}. This is an immediate consequence of the Log-Sobolev Inequality.

The entropy-transport problem has appeared previously in the literature in the attractive case, in particular when $c(x_1,x_2) = d(x_1,x_2)^2$. We mention briefly below some of the connections of the entropy-transport with other fields and point out the relevance in the Coulomb case. \medskip 
 
\noindent
\textbf{Brief comments on some applications of the entropy-transport}

\noindent
\textit{Optimal Transport and Sinkhorn algorithm:}  The entropy-transport \eqref{ETIntro} was introduced by M. Cuturi \cite{CutSin} in order to compute numerically the optimal transport plan for the distance squared cost in the $2$-marginals case via the Sinkorn algorithm. Due to its reasonable computational cost, it has been applied to a wide range of problems in various research areas, including Information Theory, Computer Graphics, Statistical Inference, Machine Learning, and Mean-Field Games. The entropic regularization method was also considered in the (attractive) multi-marginal case in the so-called \textit{barycenter problem} introduced by M. Agueh and G. Carlier \cite{AguCar} (see also \cite{CarDuvPeySch}) and in numerical methods in the time discretization of Brenier's relaxed formulation of the incompressible Euler equation \cite{BenCarNenEuler}. For a thorough presentation of the computational aspects we refer to M. Cuturi and G. Peyr\' e's book \cite{CutPeyBook}.  \medskip

\noindent
\textit{Second-order Calculus on $RCD$ spaces:} N. Gigli and L. Tamanini \cite{GigTam} studied the entropic-transport problem on a class of metric spaces with (Riemannian) Ricci curvature bounded from below (2-marginals case, $c(x_1,x_2) = d(x_1,x_2)^2$). The entropic regularization procedure was crucial for  establishing a second-order differential structure in that setting.  \medskip

\noindent
\textit{Schr\"odinger Problem:} In 1926, E. Schr\"odinger introduced the (linear) Schr\"odinger equations describing the non-relativistic evolution of a single particle in an electric field with potential energy and also established an equivalence between such equations and a system of diffusion equations \cite{Schrodinger31}. Roughly speaking, the variational problem (see \eqref{ETIntro} with $X = C([0,1],\R^d)$ and $N=2$)
arises in the Schr\"odinger manuscript while studying the limit $k\to\infty$ $(N=2)$ of the empirical measures associated to the evolution of $k$ i.i.d. Brownian motions. We refer the reader to C. L\'eonard survey \cite{LeoSurvey} for technical details and historical notes. \medskip

\noindent
\textit{Lower bound on the Hohenberg-Kohn functional in Density Functional Theory:} This is the particular case where the entropy-transport problem with Coulomb cost comes into play. It has been shown in \cite{NenThesis, DMaGerNenGorSei} that the functional \eqref{ETIntro} provides a lower bound for computing the ground state energy of the Hohenberg-Kohn functional \cite{BuDePGor, CFK, CFK17, DMaGerNen,Lieb83}. Below we give a brief description of the result. Notice that in this context $X = \R^d$ and $\mm$ is the Lebesgue measure on $\R^d$.

Assume that $\gamma \in \Pi_N(\rho)$ such that $\sqrt{\gamma} \in H^1(\R^{dN})$. This is the case, for example, when $\gamma(x_1,\dots,x_N) = \vert \psi(x_1,\dots,x_N)\vert^2$, where $\psi \in H^1(\R^{dN})$ is a ground-state wave function solving the $N$-electron Schr\"odinger Equation (see \cite{CFK, CFK17,DMaGerNen, DMaGerNenGorSei} for details). Then, we can define the Hohenberg-Kohn functional by 
\[
\tilde{F}_{\hbar}^{HK}[\rho] = \inf_{\gamma\in \Pi_N(\rho), \sqrt{\gamma} \in H^1(\R^{dN})}\bigg\lbrace \dfrac{\hbar^2}{2}\int_{\R^{dN}}\vert \nabla \sqrt{\gamma}\vert^2 dx_1\dots dx_N + \int_{\R^{dN}}\sum_{1\leq i<j\leq N}\dfrac{1}{\vert x_i-x_j\vert}d\gamma \bigg\rbrace.
\]

Now, as a consequence of the logarithmic Sobolev inequality for the Lebesgue measure \cite{GozLeo}, the following result holds: if $\rho\mathcal{L}^d\in\Pro(\R^d)$ and $\sqrt{\gamma}\in H^1(\R^{dN})$ then
\[
C_0[\rho]\leq C_{\varepsilon}[\rho] \leq  \tilde{F}_{\hbar}^{HK}[\rho], \quad \text{ with } \varepsilon = \pi\hbar^2/2.
\]

\subsection*{Example of optimal entropy couplings}
\label{sec:examples}

Let us present some computational examples of minimizers of $I_\varepsilon[\rho]$ illustrating the role of the parameter $\varepsilon$.
Before this, we recall a result on the characterization of minimizers in the one-dimensional case \cite{CoDePDMa}. In particular, according to it the minimizer of $I_0[\rho]$
is  concentrated on finitely many graphs and thus singular with respect to the product reference measure.

\begin{theorem}[\cite{CoDePDMa}]\label{teo:1DN} Let $\mu \in \mathcal{P}(\R)$ be an absolutely continuous probability measure and $f\colon\R\to \R$ strictly convex, bounded from below and non-increasing function. Then there exists a unique optimal symmetric plan $\gamma \in \Gamma^{\mathrm{sym}} (\mu)$ that solves
$$ \min_{ \gamma \in \Pi_N^{\mathrm{sym}}(\mu) } \int_{\R^{N} } \sum_{1 \leq i < j \leq N}  f(|x_j-x_i|) \, \d  \gamma. $$
Moreover, this plan is induced by an optimal cyclical map $T$, that is, $\gamma_{\mathrm{sym}}=\left(\gamma_T\right)^S$, where $\gamma_T=(\mathrm{id},T,T^{(2)} , \ldots, T^{(N-1)})_{\sharp} \mu$. An explicit optimal cyclical map is  
$$ T(x) =\begin{cases}  F_{\mu}^{-1} (F_{\mu}(x) + 1/N) \qquad & \text{ if }F_{\mu}(x) \leq (N-1)/N \\ F_{\mu}^{-1} ( F_{\mu}(x) +1 - 1/N ) & \text{ otherwise.} \end{cases}$$
Here $F_{\mu}(x)=\mu ( -\infty , x]$ is the distribution function of $\mu$, and $F_{\mu}^{-1}$ is its lower semicontinuous left inverse. 
\end{theorem}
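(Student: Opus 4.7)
The plan is to proceed in three steps: verify that the explicit formula for $T$ defines an admissible symmetric plan, establish its optimality through a cyclical rearrangement argument, and deduce uniqueness from the strict convexity of $f$.

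\textbf{Admissibility.} The cumulative distribution $F_\mu\colon\R\to[0,1]$ pushes $\mu$ to the Lebesgue measure restricted to $[0,1]$ and conjugates $T$ to the rotation $u\mapsto u+1/N\pmod 1$. Since this rotation preserves Lebesgue measure and satisfies $T^N=\mathrm{id}$, pulling back yields $T^{(k)}_\sharp\mu=\mu$ for every $k\in\N$. Consequently $\gamma_T=(\mathrm{id},T,\ldots,T^{(N-1)})_\sharp\mu$ lies in $\Pi_N(\mu)$, and its symmetrization $(\gamma_T)^S$ lies in $\Pi_N^{\mathrm{sym}}(\mu)$.

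\textbf{Optimality.} I would verify that the support of $(\gamma_T)^S$ is $c$-cyclically monotone (in the multi-marginal sense) and hence optimal. The core tool is a two-cycle rearrangement lemma: given two fibers $(x,T(x),\ldots,T^{(N-1)}(x))$ and $(y,T(y),\ldots,T^{(N-1)}(y))$, any reassignment of their components that breaks the cyclic pattern produces at least as large a total cost. Working in quantile coordinates $u_i=F_\mu(x_i)$, both cycles are arithmetic progressions with step $1/N$, and a local two-point swap across the two cycles perturbs only a pair of gaps in the distance variable. Strict convexity and monotonicity of $f$ combined with monotonicity of $F_\mu^{-1}$ show that the original, maximally equispaced configuration has the smallest pairwise sum. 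Iterating over finite permutations then yields $c$-cyclical monotonicity and therefore $C_0[(\gamma_T)^S]\le C_0[\gamma]$ for every competitor $\gamma\in\Pi_N^{\mathrm{sym}}(\mu)$.

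\textbf{Uniqueness and main obstacle.} Strict convexity of $f$ makes the rearrangement inequality strict off the cyclic configuration, so any optimal symmetric plan must concentrate on the symmetrization of the graph of $T$ and therefore coincide with $(\gamma_T)^S$. The principal technical obstacle is Step~2: the cost $f(|F_\mu^{-1}(u)-F_\mu^{-1}(v)|)$ is \emph{not}, in general, jointly convex in the quantile variables, so the ``equal spacing minimizes the pairwise repulsion'' intuition cannot be implemented by a naive convexity argument in $(u,v)$. Instead, one must exploit the convexity of $f$ in the radial variable through pairwise mass-preserving swaps performed in the original $x$-coordinates that simultaneously respect the marginal constraint; this careful bookkeeping of orderings and swaps, which is absent in the classical two-marginal case, is the delicate point at the heart of the argument.
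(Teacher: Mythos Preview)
The paper does not contain a proof of this theorem: it is quoted from \cite{CoDePDMa} as background for the numerical example in the introduction, and no argument is given or sketched. There is therefore nothing in the present paper to compare your proposal against.

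That said, your outline identifies the right ingredients but leaves the decisive step open. Admissibility (Step~1) is fine. Uniqueness (Step~3) is also standard once optimality is in hand. The gap is in Step~2: you assert that two-point swaps between cycles can only increase the cost, invoking strict convexity and monotonicity of $f$, but you do not actually carry this out, and you yourself flag that convexity in the radial variable does not translate into a usable convexity in quantile coordinates. The difficulty is real: for $N\ge 3$ a local two-point swap affects $2(N-1)$ pairwise distances simultaneously, not just two, and the signs of the resulting increments are not all aligned. A bare rearrangement inequality of Hardy--Littlewood--P\'olya type does not close this, and multi-marginal $c$-cyclical monotonicity is not known to be a \emph{sufficient} condition for optimality in the generality you need. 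The proof in \cite{CoDePDMa} does not proceed by verifying $c$-cyclical monotonicity directly; it reduces to a finite-dimensional problem on level sets of $F_\mu$, analyzes the combinatorics of optimal matchings there, and passes to the limit. If you want to make your route work, you would need to supply a genuine multi-index rearrangement lemma tailored to the cyclic structure, which is essentially the content of the cited paper.
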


\subsubsection*{One-dimensional Entropic-Transport with Coulomb cost and a Gaussian measure}
Let $\rho$ be the normal distribution on the real line with zero mean and standard deviation $\sigma = 5$. 
We compute numerically the solution of the entropic-transport problem with Coulomb cost in the real line using the Sinkorn algorithm \cite{CutSin}. 
Notice that by Theorem \ref{teo:1DN}, we know that the minimizer of  $I_0[\rho]$ is concentrated on a graph.
See Figure \ref{fig} for an illustration of the computational results. Our code is based on the Python implementation available at POT library \cite{POT}.

\begin{figure}[h]
	\centering
	\includegraphics[ scale=0.3]{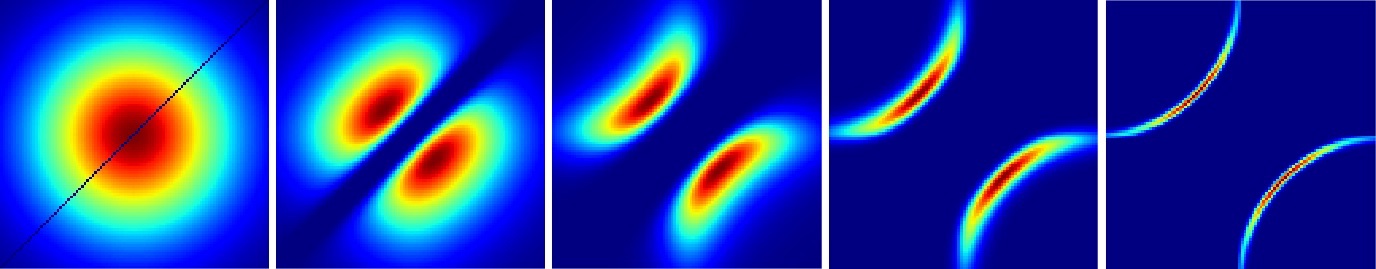}
	\caption{The dependence of the minimizer of the entropic-transport problem \eqref{ETIntro} on the entropic parameter $\varepsilon$ for the one-dimensional Coulomb cost, $N=2$ and $\rho \sim N(0,5)$. The pictures show part of the support of the optimal coupling $\gamma_{\varepsilon}$ around the origin. From the left to right: $\varepsilon = 10^4,10^{-2},10^{-3},10^{-4},10^{-5}.$ }
	\label{fig}
\end{figure}

\subsection*{Organization of the paper} In Section \ref{sec:entropic} we introduce the setting and study sufficient conditions for the existence of minimizers for the entropy-transport problem \eqref{ETIntro}. Section \ref{sec:gammaconvergence} is devoted to the $\Gamma-$convergence proof of the entropic-transport functional $C_{\varepsilon}[\gamma]$ to the multi-marginal optimal transport with repulsive costs $C_0[\gamma]$. In Section \ref{sec:duality}, we study the Kantorovich duality for the entropic-transport problem.

\subsection*{Strategy of the main proof and some technical remarks} The main result of this paper is Theorem \ref{gammaconv:maintheorem}, in which we prove the $\Gamma$-convergence of the entropic-regularized functional $C_{\varepsilon}[\gamma]$ to $C_0[\gamma]$. The technical difficulty on dealing with the $\Gamma$-convergence comes from the fact that while for the entropic part $E[\gamma]$ the minimizer $\gamma$ tends to be \textit{as spread as possible} with respect to $\mm$, for the cost $C_{0}[\gamma]$ a minimizer can be very singular and have infinite entropy.

We divide the proof in two parts. The part (I), the $\liminf-$inequality, follows basically from the lower-semicontinuity of the costs $C_0[\gamma]$ and $C_{\varepsilon}[\gamma]$ - which are obtained from the assumption $\rho \log \rho \in L_\mm^1(X)$ on the marginal measure $\rho\mm$, giving a lower bound on the entropy. The part (II), the $\limsup-$inequality, is more involved.  In Section \ref{sec:blockapproximation}, we construct a block approximation $\gamma'_n$ for a coupling $\gamma$ with $C_{0}[\gamma] <+\infty$. Such a construction is done in several steps, since we need to construct a competitor $\gamma'_n$ such that $E[\gamma'_n] <\infty$ and $\gamma'_n \in \Pi^{\mathrm{sym}}_N(\rho)$. The main idea and the rigorous construction is done at section \ref{sec:blockapproximation}.

Futhermore, we point out that our construction can deal with the case when the space $X$ is a domain in $\R^d$, answering a question raised in \cite{BenCarNenEuler}. There the $\Gamma$-convergence was proven using convolutions; an approach that does not seem to be easy to implement for domains, or in general metric spaces.\medskip

\noindent
\emph{Related works:} A proof of the $\Gamma$-convergence of \eqref{ETIntro} to the Monge-Kantorovich problem for $c(x,y) = d(x,y)^p$ first appeared in \cite{LeoGamma,Mik04} via probabilistic methods. In \cite{CarDuvPeySch}, G. Carlier, V. Duval, G. Peyr\' e and B. Schmitzer provided an alternative and more analytical proof carrying out a similar block approximation procedure for the two-marginal squared distance cost in the Euclidean space and the Wasserstein Barycenter.

\section{The entropy-regularized repulsive costs}\label{sec:entropic}

Let $(X,d)$ be a Polish space and $\mm$ be a reference measure on $X$. We denote by $\Pro(X)$  the set of Borel probability measures on $X$, and $\Prac(X)$ the set of Borel probability measures on $X$ that are absolutely continuous with respect to $\mm$.  We denote by $\mm_{N}$ the product measure $\mm\otimes \mm\otimes\cdots\otimes \mm$. This is the reference measure we use on the product space $X^N$. On $X^N$ we use the sup-metric, which we denote by $d_N$.

The class of cost functions $c\colon X^N\to \R\cup\{+\infty\}$ of our interest is given by functions of the form
\[
c(x_1,\ldots, x_N)=\sum_{1\le i<j\le N}f(d(x_i,x_j)),\quad \text{ for all }(x_1,\ldots,x_N)\in X^N,
\]
where $f:[0,\infty[\to\R\cup\{+\infty\}$ satisfies the following conditions
\begin{align*}
&f|_{]0,\infty[}\text{ is continuous, decreasing ,and }\tag{F1}\\
&\lim_{t\to 0+}f(t)=+\infty\,.\tag{F2}
\end{align*}
Above and from now on, we denote by $(x_1,\ldots, x_N)$ points in $X^N$, so $x_i\in X$ for each $i$. 

We denote by
\[
\Pi_N(\rho)=\left\{\gamma\in \mathcal{P}(X^N)~|~\mathtt{pr}^i_\sharp \gamma=\rho\mm\text{ for all }i\in \{1,\ldots,N\}\right\}
\]
the set of \textit{couplings} or \textit{transport plans}, where $\mathtt{pr}^i$ is the projection 
\[
\mathtt{pr}^i(x_1,\ldots,x_i,\ldots,x_N)=x_i~~~\text{for all }(x_1,\ldots, x_i,\ldots ,x_N)\in X^N\,.
\]
For the definition of the set of symmetric couplings $\Pi^{\mathrm{sym}}_N(\rho)$, see Definition \ref{def:symmeasures}.
We define the functional $C_0[\gamma]$ to be the cost related to the coupling $\gamma$ 
\[C_0[\gamma]=\int_{X^N}c(x_1,\ldots,x_N)\,\d \gamma(x_1,\ldots, x_N)\,.\]
Given $\varepsilon\ge 0$, we denote by $C_\varepsilon[\gamma]$ the entropy-regularized cost
\be\label{eq:entropicfunctional}
 C_\varepsilon[\gamma]=C_0[\gamma]+\varepsilon E[\gamma],\quad \text{ for all }\gamma\in \Pi^{\mathrm{sym}}_N(\rho),
\ee
where the entropy $E[\gamma]:\mathcal{P}(X^N)\to \R\cup\lbrace -\infty,+\infty\rbrace$ is defined as 

\be\label{eq:entropy}
E[\gamma]=\begin{cases}\int_{X^N}\rho_\gamma\log\rho_\gamma\,\d \mm_{N}&\text{ if }\gamma\ll \mm_{N}\\
+\infty&\text{ otherwise}\end{cases}\,.
\ee

The notation $\rho_\gamma$ stands for the Radon-Nikodym derivative of $\gamma$ with respect to the reference measure $\mm_{N}$ and $\gamma \ll \mm_{N}$ means that $\gamma$ is absolutely continuous with respect to the reference measure $\mm_{N}$.
 Let $\rho \mm\in \Prac(X)$. In this paper we are interested in the following infimum
\be\label{eq:inf}
I_\varepsilon[\rho \mm]:=\inf_{\gamma\in\Pi^{\mathrm{sym}}_N(\rho)} C_\varepsilon[\gamma]\,.
\ee

In order to guarantee the lower semicontinuity for $C_\varepsilon$, we will assume $\rho \log \rho \in L_\mm^1(X)$. This will take care of the entropy part $E[\cdot]$ of the cost. In order to establish the lower semicontinuity for the functional $C_0[\cdot]$, we assume that the measure $\rho$ satisfies
 the following two conditions: 
\begin{align*}
&\lim_{r\to 0}\sup_{x\in X}\rho (B(x,r))<\frac{1}{N(N-1)^2}~~~\text{and}\tag{A}
\\
&\int_{X\setminus B(o,r_0)}f\left(2d(x,o)\right)\,\d\rho(x)> - \infty~~~\text{for some }o\in X\,.\tag{B}
\end{align*}Above we have, by an abuse of notation, denoted the measure $\rho \mm$ by only the density $\rho$; we will use the same abbreviation in the rest of the paper if there is no risk of confusion. The Condition $(B)$ is a similar assumption than requiring, in the case of the quadratic cost, that the marginal measures have finite second moments. The Condition $(A)$ guarantees that the cost is finite. 

If we endow the spaces $\Pro(X^N)$ and $\Pro(X)$ with $w^\ast$-topology then, by Prokhorov's theorem, any subset of $\Pro(X)$ (or $\Pro(X^N)$) is tight if and only if it is relatively compact. \begin{remark}[Entropy-transport seen as a Kullback-Leibler divergence]\label{rm:kl}

If $\mu$ and $\nu$ are measures on a set $X$, the Kullback-Leibler divergence of $\mu$ with respect to $\nu$ is defined as 
\[
\KL(\mu|\nu)=\begin{cases}
\int_X\log\left(\frac{\d \mu}{\d\nu}\right)\d\mu&\text{ if }\mu\ll \nu\\
+\infty&\text{ otherwise}\end{cases}\,.\]
Now, if both measures $\mu$ and $\nu$ are absolutely continuous with respect to some reference measure $R$ of the space $X$ with densities $\rho_\mu$ and $\rho_\nu$, respectively,  we can write: 
\[
\KL(\mu|\nu)=\begin{cases}
\int_X\rho_\mu\log\left(\frac{\rho_\mu}{\rho_\nu}\right)\d\mu&\text{ if }\mu\ll \nu\\
+\infty&\text{ otherwise}\end{cases}\,.\]
Considering the entropy-regularized MOT problem, we see that the cost functional $C_{\varepsilon}[\gamma]$ can be alternatively written as the Kullback-Leibler divergence between $\gamma$ and a \textit{kernel} $\kappa$ defined below

\[
C_{\varepsilon}[\gamma]= \varepsilon\KL(\gamma|\kappa) = \varepsilon\int_{X^N}\rho_{\gamma} \ln\bigg(\dfrac{\rho_{\gamma}}{\rho_{\kappa}}\bigg)\,\d\mm_{N}, 
\] 
where $\kappa= e^{-c/\varepsilon}\mm_N$.  
\end{remark}

For the most part, in this paper we have chosen to consider as a reference measure the measure $\mm_N$. However, as the following lemma shows, we could also assume the reference measure to be $(\rho\mm)^{\otimes N}$ since the minimizers of the entropy-regularized MOT problem (\ref{eq:inf}) do not depend on the choice of the reference measure, at least if there exists a minimizer with finite cost. To state the lemma, let us introduce the notation of relative entropy: for each reference measure $R$ of a Polish space $Y$, and for each $\gamma\in \mathcal{P}(Y)$, we denote by $E(\gamma|R)$ the relative entropy of $\gamma$ with respect to $R$, defined as 
\[E(\gamma|R)=
\begin{cases}
\int_Y\log\left(\frac{\d\gamma}{\d R}\right)\,\d\gamma&\text{ if }\gamma \ll R\\
+\infty&\text{ otherwise}\end{cases}\,.\]
Now we may consider two, a priori different, entropy-regularized MOT problems: the one introduced in (\ref{eq:inf})
\begin{equation}\label{eq:usual}
I_\epsilon(\rho\mm)=\inf_{\gamma\in\Pi^{\mathrm{sym}}_N(\rho)} C_\varepsilon[\gamma]=:I_\epsilon(\rho\mm|\mm)\,,
\end{equation}
and the problem with the reference measure chosen to be $(\rho\mm)^{\otimes N}$
\begin{equation}\label{eq:rhoasreference}
I_\epsilon(\rho\mm|\rho\mm):=\inf_{\gamma\in\Pi^{\mathrm{sym}}_N(\rho)}\left(C_0[\gamma]+E(\gamma|(\rho \mm)^{\otimes N})\right)\,.
\end{equation}
The folowing Lemma \ref{lm:changeofreference} is used only to go from the compact to the general case in the duality Theorem \ref{kanto:dualthm}. The proof in \cite{DMaGer19} 
can be directly applied here to prove Lemma \ref{lm:changeofreference}.
\begin{lemma}\label{lm:changeofreference}
Let $(X,d,\mm)$ be a Polish measure space, $\rho\mm\in \mathcal{P}(X)$ a measure satisfying (A) and (B), and $c$ a cost function satisfying  (F1) and (F2). Now for all $\epsilon > 0$ we have 
\begin{equation}\label{eq:change}
I_\epsilon(\rho\mm|\mm)=I_\epsilon(\rho\mm|\rho\mm)+N\epsilon \KL(\rho\mm|\mm)=I_\epsilon(\rho\mm|\rho\mm)+N\epsilon \int_X\rho\log\rho\d\mm\,.
\end{equation}Moreover, whenever at least one side of the equality above is finite, the problems (\ref{eq:usual}) and (\ref{eq:rhoasreference}) have the same minimizers. 
\end{lemma}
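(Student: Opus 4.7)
The plan is to reduce the equality \eqref{eq:change} to a pointwise identity for the Radon--Nikodym derivatives of $\gamma$ with respect to $\mm_N$ versus $(\rho\mm)^{\otimes N}$, and then to integrate it against $\gamma$ using the marginal condition. The shift between the two entropies will turn out to be a constant depending only on $\rho$, so that the minimization problems \eqref{eq:usual} and \eqref{eq:rhoasreference} differ only by that constant.

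First I would observe that for every $\gamma\in \Pi^{\mathrm{sym}}_N(\rho)$, since each marginal $\mathtt{pr}^i_\sharp\gamma=\rho\mm$ vanishes on $\{\rho=0\}$, the plan $\gamma$ is concentrated on $S:=\{(x_1,\ldots,x_N)\in X^N:\rho(x_i)>0\text{ for all }i\}$. On $S$ the measures $\mm_N$ and $(\rho\mm)^{\otimes N}$ are mutually absolutely continuous, so $\gamma\ll\mm_N$ if and only if $\gamma\ll (\rho\mm)^{\otimes N}$, and both sides of \eqref{eq:change} are $+\infty$ in the non-absolutely-continuous case. In the absolutely continuous case, denoting by $\rho_\gamma$ and $\sigma_\gamma$ the densities of $\gamma$ with respect to $\mm_N$ and $(\rho\mm)^{\otimes N}$ respectively, we have the $\gamma$-a.e.\ identity
\[
\rho_\gamma(x_1,\ldots,x_N)=\sigma_\gamma(x_1,\ldots,x_N)\prod_{i=1}^N\rho(x_i)\quad\text{on }S.
\]

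Taking logarithms and integrating against $\gamma$ gives
\[
E[\gamma]=\int_{X^N}\log\rho_\gamma\,\d\gamma=\int_{X^N}\log\sigma_\gamma\,\d\gamma+\sum_{i=1}^N\int_{X^N}\log\rho(x_i)\,\d\gamma.
\]
By the marginal condition each summand in the last sum equals $\int_X\log\rho\,\d(\rho\mm)=\int_X\rho\log\rho\,\d\mm=\KL(\rho\mm|\mm)$, and therefore
\[
E[\gamma]=E(\gamma\mid(\rho\mm)^{\otimes N})+N\,\KL(\rho\mm|\mm).
\]
Adding $C_0[\gamma]$ and multiplying the entropy term by $\epsilon$, the functional in \eqref{eq:usual} differs from the functional in \eqref{eq:rhoasreference} by the $\gamma$-independent constant $N\epsilon\,\KL(\rho\mm|\mm)$. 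Taking infima over $\Pi^{\mathrm{sym}}_N(\rho)$ yields \eqref{eq:change}, and since the shift does not depend on $\gamma$, the two problems have the same minimizers whenever one side of the equality is finite (so that we avoid an $\infty-\infty$ ambiguity).

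The main obstacle is justifying the splitting of the integral of $\log\rho_\gamma$ into two separately meaningful pieces. The map $t\mapsto t\log t$ is bounded below by $-1/e$, so the negative part of each log-integrand has a uniform pointwise bound; combined with the marginal condition and the hypotheses (A), (B) that guarantee $C_0[\gamma]<+\infty$ for suitable competitors, one can verify that each of the three terms in the decomposition is well-defined in $\R\cup\{+\infty\}$, so the splitting is legitimate and the Fubini-type reduction of the integral $\int\log\rho(x_i)\,\d\gamma$ to an integral against the marginal is rigorous. This is exactly the book-keeping carried out in the proof from \cite{DMaGer19}, which transfers to the present multi-marginal symmetric setting without changes.
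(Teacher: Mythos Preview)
The paper does not give its own proof of this lemma; it simply states that the argument from \cite{DMaGer19} applies verbatim. Your proposal is precisely the standard entropy chain-rule computation that underlies that reference (and you acknowledge as much in your final paragraph), so the approach matches.

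One small caution on the integrability paragraph: the bound $t\log t\ge -1/e$ controls $\int(\rho_\gamma\log\rho_\gamma)_-\,\d\mm_N$ only through $\mm_N(\spt\gamma)$, which need not be finite here since $\mm$ is not assumed finite. The clean way to justify the splitting is rather to note that $E(\gamma\mid(\rho\mm)^{\otimes N})\in[0,+\infty]$ (relative entropy against a probability measure is nonnegative by Jensen) and then to use the standing hypothesis $\rho\log\rho\in L^1_\mm(X)$, which the paper imposes everywhere the lemma is actually invoked (Theorem~\ref{kanto:dualthm}). With that hypothesis, $N\,\KL(\rho\mm|\mm)$ is a finite constant and the decomposition $E[\gamma]=E(\gamma\mid(\rho\mm)^{\otimes N})+N\,\KL(\rho\mm|\mm)$ is an honest equality in $(-\infty,+\infty]$, so the rest of your argument goes through.
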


\subsection{Some properties of the entropy functional}

Let us start by noting that the minimum of the entropy is attained by the product measure and that its value is not $-\infty$.

\begin{proposition}\label{ent:min}
Let $(X,d, \mm)$ be a Polish metric measure space,
and let $\rho\mm \in \Prac(X)$ with $\rho\log\rho \in L_\mm^1(X)$ .
Then 
\[
\min_{\gamma\in\Pi^{\mathrm{sym}}_N(\rho)}E[\gamma] = \int_{X^N}\bigg(\otimes^N_{i=1}\rho\bigg)\log\bigg(\otimes^N_{i=1}\rho\bigg) \,\d\mm_{N} = N \int_X \rho\log\rho\,\d\mm > -\infty.
\]
\end{proposition}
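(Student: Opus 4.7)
The plan is to show that the product measure $\gamma_\ast:=(\rho\mm)^{\otimes N}$ is an admissible competitor in $\Pi^{\mathrm{sym}}_N(\rho)$ and then prove that it minimizes the entropy by the standard decomposition of $E[\gamma]$ into a (non-negative) Kullback--Leibler divergence from $\gamma_\ast$ plus an explicitly computable term that depends only on the marginals.

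First I would observe that $\gamma_\ast$ is clearly symmetric and has all $N$ marginals equal to $\rho\mm$, so $\gamma_\ast\in\Pi^{\mathrm{sym}}_N(\rho)$. Its density with respect to $\mm_N$ is $\prod_{i=1}^N\rho(x_i)$, and a direct use of Fubini, together with the marginal property $\int\rho\,\d\mm=1$, yields
\[
E[\gamma_\ast]=\int_{X^N}\prod_{i=1}^N\rho(x_i)\sum_{i=1}^N\log\rho(x_i)\,\d\mm_N = N\int_X\rho\log\rho\,\d\mm,
\]
which is finite by the hypothesis $\rho\log\rho\in L^1_\mm(X)$.

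Next, take any $\gamma\in\Pi^{\mathrm{sym}}_N(\rho)$. If $\gamma\not\ll\mm_N$ then $E[\gamma]=+\infty$ and there is nothing to prove, so assume $\gamma\ll\mm_N$ with density $\rho_\gamma$. Note that $\rho_\gamma=0$ whenever $\prod_i\rho(x_i)=0$ (up to $\mm_N$-null sets), because pushing forward to the $i$-th coordinate we get $\mathtt{pr}^i_\sharp\gamma=\rho\mm$. Therefore the Radon--Nikodym derivative $\d\gamma/\d\gamma_\ast$ is well defined $\gamma_\ast$-a.e., and on the set $\{\rho_\gamma>0\}$ we can write
\[
\log\rho_\gamma(x)=\log\frac{\d\gamma}{\d\gamma_\ast}(x)+\sum_{i=1}^N\log\rho(x_i).
\]
Multiplying by $\rho_\gamma$ and integrating against $\mm_N$, and using the marginal property to evaluate each term $\int\log\rho(x_i)\,\d\gamma=\int\rho\log\rho\,\d\mm$, gives the decomposition
\[
E[\gamma]=\KL(\gamma\,|\,\gamma_\ast)+N\int_X\rho\log\rho\,\d\mm.
\]
The key point here is that the integrability of the right-hand sides is guaranteed: the marginal term is finite by assumption, and the Kullback--Leibler term is well defined in $[0,+\infty]$ once one uses the bound $h\log h\ge h-1$ (applied to $h=\d\gamma/\d\gamma_\ast$) to control the negative part of the integrand against $\gamma_\ast$.

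Finally, the conclusion follows from non-negativity of the Kullback--Leibler divergence, which is Jensen's inequality applied to the strictly convex function $t\mapsto t\log t$ on $[0,\infty)$: $\KL(\gamma\,|\,\gamma_\ast)\ge 0$, with equality if and only if $\gamma=\gamma_\ast$. Hence $E[\gamma]\ge E[\gamma_\ast]=N\int_X\rho\log\rho\,\d\mm>-\infty$, and the minimum is achieved uniquely at the product measure. I expect the main (minor) obstacle to be a clean justification of the splitting of $\int\rho_\gamma\log\rho_\gamma\,\d\mm_N$ into the two pieces above when the individual integrands are not a priori absolutely integrable; this is handled by the standard truncation/monotone convergence argument combined with the elementary bound $h\log h\ge h-1$.
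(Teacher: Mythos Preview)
Your proof is correct and follows essentially the same route as the paper: both decompose $E[\gamma]$ into the relative entropy $\KL(\gamma\,|\,\gamma_\ast)$ plus the marginal term $N\int\rho\log\rho\,\d\mm$, and then use Jensen/convexity of $t\mapsto t\log t$ to conclude $\KL\ge 0$. If anything, you are a bit more explicit than the paper about the integrability issues in the splitting.
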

\begin{proof}
As we will see, the minimality is an immediate consequence of Jensen's inequality. Let $\gamma \in \Pi(\rho)$. Then
\begin{align*}
E[\gamma] & = \int_{X^N}\rho_{\gamma}\log(\rho_\gamma)\,\d\mm_{N} = 
\int_{X^N}\frac{\rho_{\gamma}}{\otimes^N_{i=1}\rho}\left(\log\left(\frac{\rho_{\gamma}}{\otimes^N_{i=1}\rho}\right) + \log\left(\otimes^N_{i=1}\rho\right)\right)\otimes^N_{i=1}\rho\,\d\mm_{N} \\
&\ge \bigg(\int_{X^N}\rho_{\gamma}\,\d\mm_{N}\bigg)\log\bigg(\int_{X^N}\rho_{\gamma}\,\d\mm_{N}\bigg) + 
\int_{X^N}\rho_{\gamma}\log\left(\otimes^N_{i=1}\rho\right)\,\d\mm_{N} \\
& = 0 + E[\otimes^N_{i=1}\rho]. \qedhere
\end{align*}
\end{proof}

Using Proposition \ref{ent:min} we immediately get the lower semicontinuity of the entropy functional by representing the entropy as relative entropy against the probability measure $\otimes_{i=1}^N(\rho\mm)$. See for instance \cite[Lemma 4.1]{Sturm} for the lower semicontinuity of the entropy when the reference measure is finite.

\begin{corollary}\label{ent:lsc}
Let $(X,d, \mm)$ be a Polish metric measure space,
and let $\rho\mm \in \Prac(X)$ with $\rho\log\rho \in L_\mm^1(X)$.
Then $E[\cdot]$ is lower semicontinuous in the set $\Pi^{\mathrm{sym}}_N(\rho)$.
\end{corollary}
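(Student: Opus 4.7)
The plan is to realize the entropy $E[\gamma]$ as the Kullback–Leibler divergence of $\gamma$ against the probability measure $(\rho\mm)^{\otimes N}$, up to an additive constant that depends only on $\rho$. Once this is established, lower semicontinuity follows immediately from the classical fact (cited from Sturm's \cite[Lemma 4.1]{Sturm}) that relative entropy against a fixed probability measure is lower semicontinuous with respect to weak convergence.

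First I would verify the key identity
\[
E[\gamma] \;=\; \KL\bigl(\gamma\,\big|\,(\rho\mm)^{\otimes N}\bigr) \;+\; N\int_X \rho\log\rho\,\d\mm \qquad\text{for every }\gamma\in\Pi^{\mathrm{sym}}_N(\rho).
\]
The additive constant is finite by the standing assumption $\rho\log\rho\in L^1_\mm(X)$. To prove the identity, observe that absolute continuity with respect to $\mm_N$ and with respect to $(\rho\mm)^{\otimes N}$ are equivalent on $\Pi^{\mathrm{sym}}_N(\rho)$: one direction is immediate since $(\rho\mm)^{\otimes N}\ll\mm_N$, while the converse uses the marginal constraint. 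Indeed, the set $A_i=\{(x_1,\ldots,x_N):\rho(x_i)=0\}$ satisfies $\gamma(A_i)\le \mathtt{pr}^i_\sharp\gamma(\{\rho=0\})=\rho\mm(\{\rho=0\})=0$, so $\gamma$ assigns zero mass to $\{\otimes_{i=1}^N\rho=0\}$; combined with $\gamma\ll\mm_N$ this yields $\gamma\ll(\rho\mm)^{\otimes N}$. When both sides are $+\infty$ the identity is trivial.

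Next, assuming $\gamma\ll(\rho\mm)^{\otimes N}$, I would split the logarithm as
\[
\log\rho_\gamma \;=\; \log\!\left(\frac{\rho_\gamma}{\otimes_{i=1}^N\rho}\right) + \log\!\left(\otimes_{i=1}^N\rho\right),
\]
integrate against $\gamma$, and use symmetry together with the marginal condition to evaluate
\[
\int_{X^N}\log\!\left(\otimes_{i=1}^N\rho\right)\d\gamma \;=\; \sum_{i=1}^N\int_X \log\rho\,\d(\rho\mm) \;=\; N\int_X \rho\log\rho\,\d\mm,
\]
which establishes the identity. One technical point to keep in mind is that the integrands in the split need not be separately integrable, but since $\gamma$ is concentrated on $\{\otimes_i\rho>0\}$ and the relative density $\rho_\gamma/\otimes_i\rho$ has logarithm bounded below by standard arguments (using that $z\log z\ge -e^{-1}$), the decomposition is justified without sign ambiguities.

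Finally, for a sequence $\gamma_n\to\gamma$ weakly in $\Pi^{\mathrm{sym}}_N(\rho)$, the identity reduces the claim to
\[
\liminf_{n\to\infty}\KL\bigl(\gamma_n\,\big|\,(\rho\mm)^{\otimes N}\bigr) \;\ge\; \KL\bigl(\gamma\,\big|\,(\rho\mm)^{\otimes N}\bigr),
\]
which is precisely the lower semicontinuity of relative entropy against the fixed probability reference $(\rho\mm)^{\otimes N}$. The main (minor) obstacle is the equivalence of absolute continuity in the first step; once that is in place, Proposition~\ref{ent:min} plus the cited lower semicontinuity result finish the proof.
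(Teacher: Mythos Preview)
Your proposal is correct and follows exactly the route the paper indicates: represent $E[\gamma]$ as $\KL\bigl(\gamma\mid(\rho\mm)^{\otimes N}\bigr)+N\int_X\rho\log\rho\,\d\mm$ (the decomposition implicit in the proof of Proposition~\ref{ent:min}) and then invoke \cite[Lemma~4.1]{Sturm} for lower semicontinuity of relative entropy against a probability reference. The only thing you add beyond the paper's one-line justification is the explicit verification that $\gamma\ll\mm_N$ and $\gamma\ll(\rho\mm)^{\otimes N}$ are equivalent on $\Pi^{\mathrm{sym}}_N(\rho)$, which is a welcome clarification.
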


Now we are ready to prove the existence of the  minimizers for entropy-regularized MOT: 

\begin{proposition}\label{prop:minexists}
Let $(X,d,\mm)$ be a Polish metric measure space. Assume that the measure $\rho \mm\in \Prac(X)$ satisfies $\rho\log\rho \in L_\mm^1(X)$ along with Conditions (A) and (B). Assume that $c\colon X^N\to\R\cup\lbrace +\infty\rbrace$ satisfies the conditions $(F1)$ and $(F2)$. Then, for each $\varepsilon\ge 0$, there exists a minimizer $\gamma\in \Pi^{\mathrm{sym}}_N(\rho)$ for the entropic-regularized cost $C_{\varepsilon}[\gamma]$. 
\end{proposition}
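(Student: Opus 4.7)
The plan is to apply the direct method of the calculus of variations: take a minimizing sequence in $\Pi^{\mathrm{sym}}_N(\rho)$, extract a weakly convergent subsequence by tightness, and pass to the limit using lower semicontinuity of $C_\varepsilon$.

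I would first verify weak compactness of the admissible set. The set $\Pi^{\mathrm{sym}}_N(\rho)$ is nonempty, since it contains the product measure $(\rho\mm)^{\otimes N}$. Tightness is automatic: all marginals are forced to equal the fixed Borel probability measure $\rho\mm$ on the Polish space $X$, which is tight, and standard projection/union arguments then produce, for every $\eta>0$, a compact $K\subset X$ such that $\gamma(K^N)\ge 1-\eta$ for every admissible $\gamma$. Since both the marginal constraint and the symmetry constraint (expressed by $\sigma_\sharp\gamma=\gamma$ for all permutations $\sigma$) pass to weak limits, Prokhorov's theorem yields that $\Pi^{\mathrm{sym}}_N(\rho)$ is sequentially weakly compact.

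Next, I would establish lower semicontinuity of $C_\varepsilon=C_0+\varepsilon E$ with respect to weak convergence on $\Pi^{\mathrm{sym}}_N(\rho)$. The entropy term is already lower semicontinuous by Corollary \ref{ent:lsc}, which is where the hypothesis $\rho\log\rho\in L^1_\mm(X)$ is consumed. For the transport term, the cost $c$ is lower semicontinuous on $X^N$: on the complement of the diagonal it is continuous by (F1), and on the diagonal $(F2)$ forces $c=+\infty$. Thus $\gamma\mapsto\int c^+\,\d\gamma$ is lsc by the standard Portmanteau-type result. The genuinely delicate point is the negative part $c^-$, since $f$ is decreasing and may tend to $-\infty$. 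Here I would invoke condition (B): using $f(d(x_i,x_j))\ge f(2\max\{d(x_i,o),d(x_j,o)\})$, which holds because of monotonicity of $f$ and the triangle inequality, one bounds $c^-(x_1,\dots,x_N)$ from above by a sum of functions depending on one coordinate at a time. Evaluating the integral via the fixed marginals $\rho\mm$ turns (B) into a uniform bound on $\int c^-\,\d\gamma$ over all admissible $\gamma$, which in turn promotes the lsc of $\int c^+\,\d\gamma$ to lsc of $\int c\,\d\gamma$.

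Finally, taking a minimizing sequence $(\gamma_n)\subset\Pi^{\mathrm{sym}}_N(\rho)$ for $C_\varepsilon$ (the trivial case $I_\varepsilon[\rho\mm]=+\infty$ being immediate), weak compactness produces a subsequence with limit $\gamma^\ast\in\Pi^{\mathrm{sym}}_N(\rho)$, and lower semicontinuity of $C_\varepsilon$ yields $C_\varepsilon[\gamma^\ast]\le\liminf_k C_\varepsilon[\gamma_{n_k}]=I_\varepsilon[\rho\mm]$, so $\gamma^\ast$ is a minimizer. The main obstacle is the lsc of $C_0$ for a cost that is not bounded below; this is precisely the role of condition (B), while (A) and $\rho\log\rho\in L^1_\mm(X)$ enter only indirectly here (through Corollary \ref{ent:lsc} and through guaranteeing a nontrivial infimum via the block-approximation constructions performed later in the paper).
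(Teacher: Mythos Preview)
Your proposal is correct and follows essentially the same approach as the paper: compactness of $\Pi^{\mathrm{sym}}_N(\rho)$ together with lower semicontinuity of $C_0$ and of $E$ (the latter via Corollary~\ref{ent:lsc}), then the direct method. The paper's proof is terser because it cites \cite{Kel} for compactness and \cite[Proposition~3.1]{GerKauRaj17} for the lsc of $C_0$, whereas you spell out the tightness argument and the role of condition~(B) in controlling $c^-$; your sketch of that step is exactly how the cited result proceeds.
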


\begin{proof}
We notice that the set $\Pi^{\mathrm{sym}}_N(\rho)$ is compact in the $w^\ast$-topology \cite{Kel}.
 The functional $E$ is lower semicontinuous by Corollary \ref{ent:lsc}, and in our setting
 the lower semicontinuity of $C_0$ is proven as a part of the proof of \cite[Proposition 3.1]{GerKauRaj17}. 
Since for each $\varepsilon\ge 0$ the functional $C_\varepsilon$ is convex, we conclude that it has a minimizer in the set $\Pi^{\mathrm{sym}}_N(\rho)$\end{proof}

\subsection{Some properties of the coupling cost $C_0[\gamma]$}Notice that in this section $\Pi_N(\rho)$ denotes the set of couplings in $X^N$ (not necessarily symmetric). Moreover, we need to assume extra hypothesis on the probability measure $\rho$ in order to guaranteee that $C_0[\gamma]$ is bounded from below for a $\gamma\in\Pi_N(\rho)$ (e.g. $f(z) = -\log(\vert z\vert)$).

 The next theorem from \cite{BuChaDeP} (see also \cite[Theorem 3.2]{GerKauRaj17}) states that for measures $\rho$ satisfying the assumptions (A) and (B) there exists $\overline{\alpha}>0$ for which the support of any optimal plan is concentrated away from the set 
\[D_{\alpha}:=\{(x_1,\ldots,x_N)\in X^N~|~d(x_i,x_j)<\alpha\text{ for some }i\ne j\}\,.\]

\begin{theorem}{\cite[Theorem 3.2]{BuChaDeP}}\label{ent:supportoutofD}
Let $(X,d)$, $\rho$, $f$, $c$ be such that the measure $\rho$ satisfies (A) and (B) and the function $f$ satisfies (F1) and (F2).  Let $\gamma$ be a minimizer of
\[
I_0[\rho] =  \min_{\gamma\in\Pi_N(\rho)}\int_{X^N} c(x_1,\dots,x_N)\,\d\gamma(x_1,\dots,x_N).
\]
Let us fix $0<\beta<1$ such that
\[\sup_{x\in X}\rho (B(x,\beta)) <\frac{1}{N(N-1)^2}.\]
Then, we have for all 
\be\label{eq:alphaneedstobe}
\alpha<f^{-1}\left(\frac{N^2(N-1)}{2}f(\beta)\right)\ee
the inclusion
\be\label{eq:claim}
\spt (\gamma)\subset X^N\setminus D_\alpha,\ee
where $f^{-1}$ stands for the left-inverse of $f$.
\end{theorem}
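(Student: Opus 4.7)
I would argue by contradiction. Assume $\spt(\gamma)$ meets $D_\alpha$, pick $(\bar x_1,\dots,\bar x_N)\in\spt(\gamma)\cap D_\alpha$ and, after relabelling, assume $d(\bar x_1,\bar x_2)<\alpha$. The choice \eqref{eq:alphaneedstobe} is equivalent to the strict inequality $f(\alpha)>\tfrac{N^{2}(N-1)}{2}f(\beta)$, so the whole argument boils down to producing a competitor $\tilde\gamma\in\Pi_N(\rho)$ with $C_0[\tilde\gamma]<C_0[\gamma]$; the gain of at least $f(\alpha)$ per unit of swapped mass has to beat an overhead of at most $\tfrac{N^{2}(N-1)}{2}f(\beta)$ per unit mass.

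The first technical step is to localise the "bad" mass. Choose a small radius $\delta>0$ and let $U\subset X^N$ be the $\delta$-ball around $(\bar x_1,\dots,\bar x_N)$ in $d_N$; by definition of the support, $\eta:=\gamma(U)>0$, and for $\delta$ small enough every pair of coordinates in $U$ sits at distance less than $\alpha$ or close to $d(\bar x_i,\bar x_j)$, so the contribution of $\gamma|_U$ to $C_0[\gamma]$ is bounded below by $\eta f(\alpha)$ (only the $(1,2)$-pair term suffices). The second step uses condition (A) to produce "escape" regions: a Vitali-type covering argument shows that $\rho$ cannot put mass $\ge 1/(N(N-1)^2)$ into any $\beta$-ball, so one can select $N$ points $y_1,\dots,y_N\in X$ with pairwise distances $>2\beta$, each lying at distance $>2\beta$ from $\{\bar x_1,\dots,\bar x_N\}$, and small balls $B_k\subset B(y_k,\beta/2)$ such that $\rho\mm(B_k)\ge \eta/N$ for every $k$.

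The construction of the competitor is the real obstacle. Write $\sigma:=\gamma|_U/\eta$ and let $\sigma_k:=(\mathtt{pr}^k)_\sharp\sigma$; similarly let $\tau_k:=\rho\mm|_{B_k}/\rho\mm(B_k)$, restricted and normalised to $\eta/N$. For each coordinate $k$ I would build, via the Kantorovich existence theorem for trivial costs, a coupling $\pi_k$ between the restriction of $\sigma_k$ of mass $\eta/N$ and $\tau_k$, and then a "cyclic" plan whose marginals form a permutation of the escape masses. Glueing these pieces together yields a symmetric signed rearrangement whose positive part is a genuine transport plan $\tilde\gamma$ satisfying $(\mathtt{pr}^i)_\sharp\tilde\gamma=\rho\mm$; the $N(N-1)/2$ pair contributions in $\tilde\gamma\setminus\gamma|_U$ are all separations of order $\beta$ by construction, so the added cost is at most $N\cdot\tfrac{N(N-1)}{2}f(\beta)\cdot\eta$, where the extra factor $N$ is the number of coordinate indices needing redistribution after symmetrisation. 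Comparing,
\[
C_0[\tilde\gamma]-C_0[\gamma]\le -\eta f(\alpha)+\eta\,\tfrac{N^{2}(N-1)}{2}f(\beta)<0,
\]
contradicting optimality and proving \eqref{eq:claim}. The combinatorial bookkeeping in producing the cyclic rearrangement with marginal-preserving property is where I expect most of the work: one has to chase the exact factor $N^{2}(N-1)/2$, which is precisely what forces the bound \eqref{eq:alphaneedstobe} rather than a cleaner one.
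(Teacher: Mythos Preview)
The paper does not prove this theorem; it is quoted verbatim from \cite{BuChaDeP} (see also \cite[Theorem~3.2]{GerKauRaj17}), so there is no in-paper proof to compare against. Your overall contradiction strategy---assume $\spt(\gamma)\cap D_\alpha\ne\emptyset$, then build a competitor with strictly smaller cost exploiting $f(\alpha)>\tfrac{N^2(N-1)}{2}f(\beta)$---is exactly the approach of those references.

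Where your sketch diverges from the actual argument, and where it has a genuine gap, is in the construction of the competitor. You propose to select escape points $y_1,\dots,y_N\in X$ with nearby $\rho$-mass and then transport the marginals $\sigma_k$ of the excised piece onto balls $B_k$ via auxiliary two-marginal couplings $\pi_k$, finally ``glueing'' into a signed measure whose positive part lies in $\Pi_N(\rho)$. This last step is not justified: taking the positive part of a signed combination will in general destroy the marginal constraints, and nothing in your description explains why the result is nonnegative to begin with. The standard argument avoids this entirely. One does \emph{not} look for escape points in $X$; instead one iteratively finds $N-1$ further points $\bar x^{(2)},\dots,\bar x^{(N)}\in\spt(\gamma)$ (the bad point being $\bar x^{(1)}$) such that a designated coordinate of each new point is $\beta$-far from all coordinates of the previously chosen ones---this is exactly where the bound $\rho(B(\cdot,\beta))<\tfrac1{N(N-1)^2}$ is used, since at step $k$ one must avoid at most $kN\le N(N-1)$ balls. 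The competitor is then obtained by taking equal small masses near each $\bar x^{(i)}$ and permuting one coordinate cyclically among them; this preserves all marginals by construction, with no signed measures and no positive-part extraction. The factor $\tfrac{N^2(N-1)}{2}$ then falls out of bounding the cost of the $N$ new $N$-tuples, each carrying $\binom{N}{2}$ pairs at distance at least $\beta$.

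In short: right idea, but the competitor you describe is not the one that works, and the ``positive part'' shortcut is where your plan would break.
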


Next we observe that one can restrict the problem $\min_{\gamma\in\Pi(\rho)}C_0[\gamma]$  to the class of symmetric couplings in $X^N$ having all the marginals equal to $\rho$.

\begin{definition}[Symmetric measures]\label{def:symmeasures}
A measure $\gamma \in \mathcal{P}(X^N)$ is symmetric if 
\[
\int_{X^N}\phi(x_1,\dots,x_N)\,\d\gamma = \int_{X^N} \phi(\overline{\sigma}(x_1,\dots,x_N))\,\d\gamma, \text{ for all } \phi \in  \mathcal{C}(X^N)
\]
 for all permutations $\overline{\sigma}$ of the $N$ symbols $(x_1,\ldots, x_N)$. We denote by $\mathcal{P}^{\mathrm{sym}}(X^N)$ the set of symmetric probability measures in $X^N$, and notice that $\Pi^{\mathrm{sym}}_N(\rho) :=  \Pi_N(\rho)\cap \mathcal{P}^{\mathrm{sym}}(X^N)$.
\end{definition}

Let us also introduce the notation for symmetrized measures. If $\gamma$ is a Borel measure on $X^N$, we denote by $\gamma^S$ the symmetrized measure 
\[\gamma^S:=\frac{1}{N!}\sum_{\sigma\in \mathcal{S}_N}\sigma_\sharp\gamma\,,\]
where $\mathcal{S}_N$ is the set of permutations of the $\{1,\ldots,N\}$ koordinates $(x_1,\ldots, x_N)$. 
The following result follows immediately.

\begin{proposition}\label{ent:c0symplans}
Under the hypothesis of Proposition \ref{ent:supportoutofD}, we have that
\begin{equation}\label{eq:MKsymMK}
\min_{ \gamma \in \Pi_N(\rho) } \int_{X^{N} } c(x_1,\ldots, x_N) \, \d  \gamma = \min_{ \gamma \in \Pi_N^{\mathrm{sym}}(\rho) } \int_{X^{N} } c(x_1,\ldots, x_N) \, \d  \gamma. 
\end{equation}
\end{proposition}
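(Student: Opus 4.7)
The strategy is to exploit the fact that the cost function $c(x_1,\ldots,x_N)=\sum_{i<j}f(d(x_i,x_j))$ is invariant under any permutation of its arguments, so symmetrizing a coupling preserves the cost and the marginals. The inequality $\min_{\Pi_N^{\mathrm{sym}}(\rho)}C_0[\gamma]\ge \min_{\Pi_N(\rho)}C_0[\gamma]$ is immediate since $\Pi_N^{\mathrm{sym}}(\rho)\subset\Pi_N(\rho)$, so the content of the statement is the reverse inequality.

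The first step is to take any minimizer $\gamma^{\ast}\in\Pi_N(\rho)$ of the left-hand side, whose existence follows from the same lower semicontinuity and tightness argument used in Proposition \ref{prop:minexists} (the entropy term is simply absent here; the hypotheses (A), (B), (F1), (F2) combined with Theorem \ref{ent:supportoutofD} guarantee a finite-cost minimizer exists, since one can even exhibit a plan with cost bounded by Condition (B)). Then I would form the symmetrization
\[
\gamma^{S}:=\frac{1}{N!}\sum_{\sigma\in\mathcal{S}_N}\sigma_{\sharp}\gamma^{\ast},
\]
where, with a slight abuse of notation, $\sigma$ denotes the coordinate-permutation map on $X^N$.

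The second step is to verify that $\gamma^{S}\in\Pi_N^{\mathrm{sym}}(\rho)$. Symmetry follows directly from the definition, because pre-composing the sum with any $\tau\in\mathcal{S}_N$ merely reindexes the summands. The marginal condition follows from $\mathtt{pr}^i\circ\sigma=\mathtt{pr}^{\sigma^{-1}(i)}$, which gives
\[
\mathtt{pr}^i_{\sharp}\sigma_{\sharp}\gamma^{\ast}=\mathtt{pr}^{\sigma^{-1}(i)}_{\sharp}\gamma^{\ast}=\rho\mm,
\]
for every $\sigma$, hence every marginal of $\gamma^{S}$ is $\rho\mm$ as well.

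The final step is to observe that, since $c\circ\sigma=c$ for every $\sigma\in\mathcal{S}_N$, one has
\[
\int_{X^N} c\,\d(\sigma_{\sharp}\gamma^{\ast})=\int_{X^N}c\circ\sigma\,\d\gamma^{\ast}=\int_{X^N}c\,\d\gamma^{\ast},
\]
so averaging gives $C_0[\gamma^{S}]=C_0[\gamma^{\ast}]$. Therefore $\gamma^{S}$ is a symmetric competitor realizing the value of the left-hand minimum, which proves the opposite inequality and hence equality. I do not anticipate any real obstacle here: the only point requiring care is the existence of a minimizer on the unrestricted set $\Pi_N(\rho)$, which is standard under (A), (B), (F1), (F2), and the slightly subtle bookkeeping with the projection identity $\mathtt{pr}^i\circ\sigma=\mathtt{pr}^{\sigma^{-1}(i)}$.
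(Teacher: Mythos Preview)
Your proof is correct and is exactly the standard argument the paper has in mind: symmetrize an optimal coupling, use $c\circ\sigma=c$ to keep the cost unchanged, and check that the marginals are preserved. The paper itself gives no proof beyond remarking that the result ``follows immediately,'' so your write-up simply spells out that implicit reasoning.
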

\vspace{5mm}

\section{The $\Gamma$-convergence of Entropic-regularized cost}\label{sec:gammaconvergence}

Now let us turn to the $\Gamma$-convergence. 
 From now on,  $(\tau_n)_{n\in\N}$ is any sequence of positive real numbers decreasing to zero. Let us introduce the following functionals: for each $n\in\N$
\[\mathcal{C}_n:\mathcal{P}^{\mathrm{sym}}(X^N)\to\R\cup\{+\infty\},~~ \mathcal{C}_n[\gamma]=\begin{cases}C_{\tau_n}[\gamma]&\text{ if }\gamma\in \Pi_N(\rho)\\+\infty&\text{ otherwise}\end{cases}\]
and 
\[\mathcal{C}:\mathcal{P}^{\mathrm{sym}}(X^N)\to\R\cup\{+\infty\},~~ \mathcal{C}[\gamma]=\begin{cases}C[\gamma]&\text{ if }\gamma\in \Pi_N(\rho)\\+\infty&\text{ otherwise}\end{cases}.\]

The goal of this section is to prove that the sequence $(\mathcal{C}_{n\in\N})$ $\Gamma$-converges to $\mathcal{C}$ in the space $\mathcal{P}^{\mathrm{sym}}(X^N)$. 

\begin{theorem}\label{gammaconv:maintheorem}
Let $(X,d,\mm)$ be a Polish metric measure space. Let $\rho \in \Prac(X)$ with $\rho\log\rho \in L_\mm^1(X)$ satisfying (A) and (B).
Then the sequence $(\mathcal{C}_n)$ $\Gamma$-converges to $\mathcal{C}$ in the space $\mathcal{P}^{\mathrm{sym}}(X^N)$. 
\end{theorem}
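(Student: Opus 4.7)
The plan is to prove the two standard $\Gamma$-convergence inequalities separately. For the easy half, fix $\gamma_n \to \gamma$ in $\mathcal{P}^{\mathrm{sym}}(X^N)$ and assume $\liminf_n \mathcal{C}_n[\gamma_n] < +\infty$ (otherwise there is nothing to prove). Then $\gamma_n \in \Pi_N^{\mathrm{sym}}(\rho)$ eventually, and $\gamma \in \Pi_N^{\mathrm{sym}}(\rho)$ by $w^\ast$-closedness of this set (invoked in the proof of Proposition \ref{prop:minexists}). By Proposition \ref{ent:min} the entropy $E[\gamma_n]$ is bounded below by the finite constant $c_0 := N \int_X \rho \log \rho \, \d \mm > -\infty$, so $\tau_n E[\gamma_n] \ge \tau_n c_0 \to 0$ and hence
\[
\mathcal{C}_n[\gamma_n] \;\ge\; C_0[\gamma_n] + \tau_n c_0.
\]
Passing to the $\liminf$ and invoking lower semicontinuity of $C_0$ on $\Pi_N^{\mathrm{sym}}(\rho)$ (established inside the proof of Proposition \ref{prop:minexists}) yields $\liminf_n \mathcal{C}_n[\gamma_n] \ge C_0[\gamma] = \mathcal{C}[\gamma]$.

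\textbf{The $\limsup$ inequality: block approximation.} When $\mathcal{C}[\gamma] = +\infty$ one takes $\gamma_n \equiv \gamma$; otherwise $\gamma \in \Pi_N^{\mathrm{sym}}(\rho)$ with $C_0[\gamma] < +\infty$, and I plan to build a recovery sequence via the block approximation of Section \ref{sec:blockapproximation}. Choose Borel partitions $\{B_k^{(n)}\}_k$ of $X$ with $\mathrm{diam}(B_k^{(n)}) \le r_n \to 0$, set $B_{\mathbf{k}}^{(n)} := B_{k_1}^{(n)} \times \cdots \times B_{k_N}^{(n)}$, and define
\[
\widetilde{\gamma}_n \;:=\; \sum_{\mathbf{k}} \frac{\gamma(B_{\mathbf{k}}^{(n)})}{\prod_{i=1}^N (\rho\mm)(B_{k_i}^{(n)})} \, (\rho\mm)|_{B_{k_1}^{(n)}} \otimes \cdots \otimes (\rho\mm)|_{B_{k_N}^{(n)}},
\]
omitting vanishing-mass terms, together with its symmetrization $\gamma_n := (\widetilde{\gamma}_n)^S$. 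Then $\gamma_n \in \Pi_N^{\mathrm{sym}}(\rho)$, $\gamma_n \ll \mm_N$, and $\gamma_n \to \gamma$ weakly as $r_n \to 0$. A direct expansion of $\log \rho_{\gamma_n}$ gives the decomposition $E[\gamma_n] = H_n + c_0$ with
\[
H_n \;=\; \sum_{\mathbf{k}} \gamma(B_{\mathbf{k}}^{(n)}) \log \frac{\gamma(B_{\mathbf{k}}^{(n)})}{\prod_{i} (\rho\mm)(B_{k_i}^{(n)})},
\]
which is the discrete Kullback--Leibler divergence between the block histograms of $\gamma$ and of $(\rho\mm)^{\otimes N}$. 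Using the marginal bound $\gamma(B_{\mathbf{k}}^{(n)}) \le \min_i (\rho\mm)(B_{k_i}^{(n)})$ one gets $H_n \le (N-1)\log(1/\delta_n)$ whenever every positive-mass block satisfies $(\rho\mm)(B_k^{(n)}) \ge \delta_n$, and choosing $\delta_n$ with $\tau_n \log(1/\delta_n) \to 0$ forces $\tau_n E[\gamma_n] \to 0$.

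\textbf{The main obstacle.} The hard part is the convergence $C_0[\gamma_n] \to C_0[\gamma]$, because the naive block plan smears mass to within distance $r_n$ between coordinates, so each ``diagonal block'' (one with $B_{k_i}^{(n)} = B_{k_j}^{(n)}$ for some $i \ne j$) of $\widetilde{\gamma}_n$ contributes cost of order $f(r_n) \gamma(B_{\mathbf{k}}^{(n)})$ and blows up even when $\gamma$ sits well off the diagonal. The remedy is a two-scale construction. First, I would approximate $\gamma$ by plans $\gamma^{(\alpha)} \in \Pi_N^{\mathrm{sym}}(\rho)$ supported in $X^N \setminus D_\alpha$, obtained by excising $\gamma|_{D_\alpha}$ and redistributing the removed mass on a fixed off-diagonal ``good'' region so as to restore the $\rho\mm$-marginals; since $C_0[\gamma] < +\infty$ forces $c \in L^1_\gamma$, the contribution of the excision is small and $C_0[\gamma^{(\alpha)}] \to C_0[\gamma]$ as $\alpha \to 0$. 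Next, apply the block construction to $\gamma^{(\alpha)}$ with $r_n \ll \alpha$, so that no block of $\widetilde{\gamma}_n^{(\alpha)}$ meets $D_{\alpha/2}$ and $c$ is uniformly continuous on every participating block; a uniform-continuity estimate then gives $C_0[\gamma_n^{(\alpha)}] \to C_0[\gamma^{(\alpha)}]$ as $n \to \infty$. A diagonal choice $\alpha_n, r_n, \delta_n \to 0$ obeying $r_n \ll \alpha_n$ and $\tau_n \log(1/\delta_n) \to 0$ produces the desired recovery sequence. The delicate point, and the reason Section \ref{sec:blockapproximation} requires a careful construction, is precisely the marginal-preserving excision near the diagonal.
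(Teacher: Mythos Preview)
Your $\liminf$ argument matches the paper's exactly. Your $\limsup$ strategy---block-approximate the bulk, excise a neighbourhood of the diagonal, and redistribute the excised mass so as to restore the marginals---is also the paper's strategy, and you correctly identify the marginal-preserving redistribution as the delicate step (the paper devotes most of Section~\ref{sec:blockapproximation} to it, reserving product mass around two points $x,x'\in\spt\gamma$ with all $2N$ coordinates pairwise distinct and coupling the excised marginal against these reserves in two stages).

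The substantive difference, and a genuine gap in your outline, is the entropy control. You claim $H_n\le (N-1)\log(1/\delta_n)$ provided every positive-mass block has $\rho\mm$-mass at least $\delta_n$. But this lower mass bound is incompatible with a small-diameter partition of all of $X$: if $\spt(\rho\mm)$ is non-compact (or merely contains more than $1/\delta_n$ disjoint balls of radius $r_n$), then any partition into sets of diameter at most $r_n$ has infinitely many pieces, and their masses cannot all exceed $\delta_n$. You cannot simultaneously enforce $\diam(B_k^{(n)})\le r_n$ and $(\rho\mm)(B_k^{(n)})\ge\delta_n$ on a full partition of $X$ without first truncating to a compact set and sending the complement to the ``remainder'' that gets redistributed---which is precisely what the paper does via the set $K_n$ in \eqref{eq:largeCompact}. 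Even after that fix, tying the three scales $\alpha_n,r_n,\delta_n$ to $\tau_n$ so that $\tau_n\log(1/\delta_n)\to 0$ while $r_n\ll\alpha_n$ still gives cost convergence requires knowing $(\tau_n)$ when you build the approximants.

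The paper sidesteps this entirely: it never bounds $E[\gamma_n']$ quantitatively. Lemma~\ref{lma:finiteentropy} shows only that $E[\gamma_n']<\infty$ for each fixed $n$ (immediate, since $\gamma_n'$ is a finite sum of product measures with densities dominated by $\rho$), and then the recovery sequence is produced by a \emph{slowdown} diagonal: set $\gamma_n=\gamma_{k(n)}'$ with $k(n)\to\infty$ so slowly that $\tau_n E[\gamma_{k(n)}']<\sqrt{\tau_n}\to 0$. This decouples the construction of the approximants from the particular sequence $(\tau_n)$ and removes the feasibility problem in your joint choice of $\alpha_n,r_n,\delta_n$.
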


Let us fix $\gamma\in \mathcal{P}^{\mathrm{sym}}(X^N)$. We need to show that 
\begin{align*}
&\text{For each sequence }(\gamma_n)_{n\in\N}\text{ that converges to }\gamma\\
&\text{we have }\liminf_{n\to\infty} \mathcal{C}_n[\gamma_n]\ge C[\gamma]\text{, and }\tag{I}\\
&\text{There exists a sequence }(\gamma_n)_{n\in\N}\text{ that converges to }\gamma\text{ and }\\
&\limsup_{n\to\infty }\mathcal{C}_n[\gamma_n]\le C[\gamma]\,.\tag{II}\end{align*}

The proof of Theorem \ref{gammaconv:maintheorem} is divided into two parts. The proof of the first part, the liminf-inequality (I), is short and is established in the next subsection. The remainder of this section is then divided into subsections in which the second part, the limsup-inequality (II) is proven.

\subsection{Proof of condition (I)} We fix a sequence $(\gamma_n)_{n\in\N}$ that converges to $\gamma$. If $\gamma\notin \Pi_N(\rho)$, then since the set $\Pi_N(\rho)$ is compact, for large indices we also have $\gamma_n\notin \Pi_N(\rho)$, so both sides of inequality (I) are $+\infty$, and we are done. Hence we may assume that $\gamma$ and $\gamma_n$'s are elements of the set $\Pi_N(\rho)$. Since now $\gamma_n\in \Pi_N(\rho)$, the claim (I) follows from the lower-semicontinuity of $\gamma\mapsto \int c\,\d\gamma$ and from the entropy lower bound shown in Proposition \ref{ent:min}. 

\subsection{Constructing an approximation of the coupling $\gamma$}\label{sec:blockapproximation}

First of all, we need to construct an approximation of $\gamma$ only in the case where $C_0[\gamma] < \infty$: if this is not the case then any sequence $(\gamma_n)$ converging to $\gamma$ can be used to prove  Condition (II). The idea of the construction is to redefine a large part of $\gamma$ to be a product measure on finitely many Borel sets with small diameter. In order not to increase the cost by too much, the Borel sets we are using have to be far away from the diagonal compared to the diameter of the sets. We call the part of the measure defined in this way \emph{the core part} of the approximation. For the rest of the measure, we take another finite combination of product measures. However, this time the sets do not need to have small (or even bounded) diameter, but just small measure. This part will be called \emph{the remainder part} of the approximation.

We start the construction by taking out a small part of $\gamma$ that will later be used to deal with the remainder part of the approximation. For this we take a sequence of radii defined as $r_n = 1/n$. Since $C[\gamma] < \infty$, there exists a point $x =(x_1,\dots, x_N)\in \spt(\gamma)$ with 
\[
x_i \ne x_j \quad \text{ if }1 \le i <  j \le N.
\]
Moreover, since $\gamma\in \Pi_N(\rho)$ and $\rho$ satisfies (A), we have 
\begin{align*}
\gamma(\{(y_1,\dots, & y_N)  \in X^N  \,| \, y_i \ne x_j \text{ for all }i,j\}) \\
& \ge 1 - \sum_{i\ne j}  \gamma(\{(y_1,\dots, y_N) \in X^N \,| \, y_i = x_j \}) \\
& = 1 - \sum_{i\ne j}  \rho(X \setminus \{x_j\}) \ge 1 - N(N-1) \frac{1}{N(N-1)^2} > 0.
\end{align*}

Thus, using again $C[\gamma] < \infty$, there exists another point $x'=(x_{N+1},\dots, x_{2N}) \in \spt(\gamma)$, so that
\[
x_i \ne x_j \quad \text{ if }1 \le i <  j \le 2N.
\]
From now on, we consider $x,x'$ fixed. Therefore, for $n \in \N$ sufficiently large we have 
\begin{equation}
d(x_i,x_j)>r_n \quad \text{ if }1 \le i <  j \le 2N.\label{eq:farenough}
\end{equation}

Let us denote by 
\[
B_n:=B(x,\tfrac{r_n}{10})\quad\text{and}\quad B_n':=B(x',\tfrac{r_n}{10})
\]
the balls around $x$ and $x'$ with radii $r_n/10$ in the sup-metric of the product space. So,
\[y=(y_1,\ldots,y_N)\in B_n\]
 if and only if 
\[d(x_i,y_i)<\tfrac{r_n}{10}\text{ for all }i\in \{1,\ldots, N\}\,,\]
and analogously for $B_n'$ with the relevant index modifications. 

Let us now define
\begin{align*}
\gamma_{B_n}=\left(\frac{\gamma\restr{B_n} }{\gamma(B_n)} \right)^S\quad\text{and}\quad
\gamma_{B_n'}=\left(\frac{\gamma\restr{B_n'} }{\gamma(B_n')} \right)^S\,.
\end{align*}
Observe that $\gamma_{B_n}$ and $\gamma_{B_n'}$ are symmetric probability measures. 
Since the marginals of a symmetric measure are the same, we may denote by $\rho_{B_n}$ the marginal of $\gamma_{B_n}$ and similarly by $\rho_{B_n'}$ the marginal of $\gamma_{B_n'}$.
Let us further denote $\tilde B_n:=\spt\gamma_{B_n}$, $\tilde B_n':=\spt\gamma_{B_n'}$ and
\begin{equation}\label{eq:epsilondefinition}
 \varepsilon_n := \frac1N\min\left\{\gamma(\tilde B_n),\gamma(\tilde B_n'),r_n,\frac{r_n}{f(2r_n/5)}\right\}\,.
\end{equation}

We then define a measure
\begin{align*}
\gamma_{0,n}:= & \gamma\restr{X^n\setminus (\tilde B_n\cup \tilde B_n')}
+\frac{\gamma(\tilde B_n)-\varepsilon_n}{\gamma(\tilde B_n)}\gamma\restr{\tilde B_n}+\frac{\gamma(\tilde B_n')-\varepsilon_n}{\gamma(\tilde B_n')}\gamma\restr{\tilde B_n'}.
\end{align*}
The idea behind the measure $\gamma_{0,n}$ is that we have chopped of a small part of the measure around the points $x$ and $x'$ (symmetrically) for later use. Since we are working with a singular cost, we still need to take out a small neighbourhood of the diagonals before approximating by product measures. We do this now.

We fix a compact $K_n\subset X$ such that 
\begin{equation}\label{eq:largeCompact}
\gamma_{0,n}(X^N\setminus K_n^N)<\frac{\varepsilon_n}2
\end{equation}
and take a small enough $\delta_n \in (0,r_n)$ so that
\begin{equation}\label{eq:smalldiagonal}
  \gamma_{0,n}(D_{\delta_n}) < \frac{\varepsilon_n}2.
\end{equation}
Using $K_n$ and  $\delta_n$ we then define
\begin{equation}\label{gammaconv:gamma2n}
\gamma_{1,n}:=\gamma_{0,n}|_{K_n^N\setminus D_{\delta_n}}.
\end{equation} 
The measure $\gamma_{1,n}$ is now the core part of the measure that we approximate.
We  denote by $\rho_{1,n}$ the marginals of the symmetric measure $\gamma_{1,n}$.

Let us then approximate the measure $\gamma_{1,n}$. We take $\lambda_n \in (0,\delta_n/n)$
so that 
\begin{equation}\label{eq:lambdachoice}
 |f(r)-f(s)| < \varepsilon_n \qquad \text{for all }r,s \in [\delta_n/2,2\diam(K_n)]\text{ with }|r-s| \le 2\lambda_n.
\end{equation}
Such $\lambda_n$ exists by the uniform continuity of $f$ on the compact set$[\delta_n/2,2\diam(K_n)]$.
Since the set $K_n$ is compact, we may fix a finite Borel partition $\{B_n^i\}_{i=1}^{M_n}$ of the set $\spt(\rho_{1,n})$ such that
\begin{align*}
&\diam(B_n^i)<\lambda_n\quad\text{and}\quad
0<\rho_{1,n}(B_n^i)<\varepsilon_n\text{ for all }i\in \{1,\ldots, M_n\}\,.\end{align*}
We are now ready to define the core part approximants $\gamma_{1,n}^a$ as
\begin{equation}
\label{gammaconv:gamma2na} 
\gamma_{1,n}^a=\sum_{(k_1,\ldots,k_N)\in M_n^N}\frac{\gamma_{1,n}(B_n^{k_1}\times\cdots\times B_n^{k_N})}{\rho_{1,n}(B_n^{k_1})\cdots\rho_{1,n}(B_n^{k_N})}\rho_{1,n}|_{B_n^{k_1}}\otimes\cdots\otimes\rho_{1,n}|_{B_n^{k_N}}\,.
\end{equation}

Now let us handle the main part of the remainder of the measure, namely the measure 
\[
\gamma_{2,n}:=\gamma_{0,n}|_{D_{\delta_n}\cup (X^N\setminus K_n^N)}\,.
\]
Because $\gamma_{0,n}$ and the set where we restrict it is symmetric, also $\gamma_{2,n}$ is.
We may thus denote its marginals by $\rho_{2,n}$.

In order to determine which part of the remaining marginal measure should be coupled where, we define a partition $\{A_{i,n}\}_{i=1}^N$ of the space $X$ by setting, for all $i\in \{1,\ldots, N-1\}$
\[A_{i,n}:=\{y\in X~|~d(x_i,y)\le \tfrac{r_n}{2}\}\,,\]
and 
\[A_{N,n}:=X\setminus \bigcup_{i=1}^{N-1}A_{i,n}\,.\]
Condition \eqref{eq:farenough} guarantees that the sets $A_{i,n}$ are pairwise disjoint. 

 Now we approximate $\gamma_{2,n}$ by the measure 
\[\gamma_{2,n}^a:=N\left(\sum_{i=1}^N\eta_{n,i}\right)^S\,,\]
 where for all $i$ the measure $\eta_{n,i}$ is the product 
\[
\eta_{n,i} := \left(\bigotimes_{k=1}^{i-1}\frac{\rho_{B_n}\restr{B(x_k,r_n/10)}}{\rho_{B_n}(B(x_k,r_n/10))}\right)
\otimes \rho_{2,n}\restr{A_{i,n}}
\otimes \left(\bigotimes_{k=i+1}^N\frac{\rho_{B_n}\restr{B(x_k,r_n/10)}}{\rho_{B_n}(B(x_k,r_n/10))}\right).
\]
By the definition of the sets $A_{i,n}$, for every $(y_1,\dots,y_N) \in \spt(\gamma_{2,n}^a)$ we have for each $i \ne j$
\begin{equation}\label{eq:remainderoffdiagonal}
d(y_i,y_j) \ge |d(y_i,x_j) - d(x_j,y_j)| > \frac{r_n}{2}-\frac{r_n}{10} = \frac{2r_n}{5},
\end{equation}
where we have assumed (which we can do without loss of generality) that $y_j \in \overline{B}(x_j,r_n/10)$.

What we have done using the measure $\gamma_{2,n}^a$ is that we have coupled the marginals of the measure $\gamma_{2,n}$ with some suitable parts of the marginals of the reserved  measure that was taken out around the point $x$. In this way we have used unevenly the marginals of this reserved part. To handle the rest of the reserved part of the measure around the point $x$, we now use the reserved measure around the point $x'$. So, we need to redefine the coupling for the part of the marginal given by
\[
\rho_{3,n} := (\texttt{pr}^1)_\sharp\frac{\varepsilon_n}{\gamma(\tilde B_n)}\gamma\restr{\tilde B_n}  +\rho_{2,n} - (\texttt{pr}^1)_\sharp\gamma_{2,n}^a.
\]
We define it as
\[
\gamma_{3,n}^a := \left(\sum_{i=1}^N\phi_{n,i}  \right)^S,
\]
where each $\phi_{n,i}$ is defined as
\[
\phi_{n,i} := \left(\bigotimes_{k=N+1}^{N+i-1}\frac{\rho_{B_n}\restr{B(x_k,r_n/10)}}{\rho_{B_n}(B(x_k,r_n/10))}\right)
\otimes \rho_{3,n}
\otimes \left(\bigotimes_{k=N+i+1}^{2N}\frac{\rho_{B_n}\restr{B(x_k,r_n/10)}}{\rho_{B_n}(B(x_k,r_n/10))}\right).
\]
Since $\spt(\rho_{3,n}) \subset \spt(\rho_{B_n})$, we have that for every $(y_1,\dots,y_N) \in \spt(\gamma_{3,n}^a)$ and each $i \ne j$
\begin{equation}\label{eq:remainderoffdiagonal2}
d(y_i,y_j) \ge |d(x_{k(i)},x_{k(j)})- d(y_i,x_{k(i}) - d(x_{k(j)},y_j)| > {r_n}-\frac{r_n}{10}-\frac{r_n}{10} = \frac{4r_n}{5},
\end{equation}
where $k(i)\ne k(j)$ are the indices for which $y_j \in \overline{B}(x_{k(j)},r_n/10)$ and $y_i \in \overline{B}(x_{k(i)},r_n/10)$.

What remains is the part of the measure around $x'$ that was not used for $\gamma_{3,n}^a$. Since $\gamma_{3,n}^a$ used the marginals from this part of the reserved measure evenly, we may simply couple the rest by a measure
\[
\gamma_{4,n}^a := b\left(\bigotimes_{k=N+1}^{2N}\frac{\rho_{B_n}\restr{B(x_k,r_n/10)}}{\rho_{B_n}(B(x_k,r_n/10))}\right)^S,
\]
with $b$ being the correct scaling constant.
Similarly as for the previous remainder part, we have that for every $(y_1,\dots,y_N) \in \spt(\gamma_{4,n}^a)$ and each $i \ne j$ the inequality \eqref{eq:remainderoffdiagonal2} holds.

Now we are ready to define the full approximation as
\[
\gamma_n' = \gamma_{1,n}^a + \gamma_{2,n}^a +  \gamma_{3,n}^a +  \gamma_{4,n}^a.
\]
By construction $\gamma_n' \in \Pi^\mathrm{sym}(\rho)$.

\subsection{Narrow convergence of the approximations}

Let us now prove that the sequence $(\gamma_n')_n$ narrowly converges to $\gamma$. We could argue this by using the Wasserstein distance. However, let us do it here directly using the definition of narrow convergence.

\begin{lemma}\label{lma:weakconvergence}
The sequence $(\gamma_n')_n$ narrowly converges to $\gamma$.
\end{lemma}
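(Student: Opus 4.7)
The plan is to decompose $\gamma_n' = \gamma_{1,n}^a + R_n$ with $R_n := \gamma_{2,n}^a + \gamma_{3,n}^a + \gamma_{4,n}^a$, and to show that $R_n$ vanishes in total variation while the core approximation $\gamma_{1,n}^a$ is narrowly close to $\gamma$. Each $\gamma_{j,n}^a$ is a positive measure and $\gamma_n'\in \Pi_N^{\mathrm{sym}}(\rho)$ has total mass one; moreover the block definition \eqref{gammaconv:gamma2na} was designed so that $\gamma_{1,n}^a(B_n^{k_1}\times\cdots\times B_n^{k_N}) = \gamma_{1,n}(B_n^{k_1}\times\cdots\times B_n^{k_N})$ for every multi-index, hence $\|\gamma_{1,n}^a\| = \|\gamma_{1,n}\|$ and $\|R_n\| = 1-\|\gamma_{1,n}\|$. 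Combining $\|\gamma-\gamma_{0,n}\|_{\mathrm{TV}} = 2\varepsilon_n$ with the estimates \eqref{eq:largeCompact} and \eqref{eq:smalldiagonal}, which remove at most an extra $\varepsilon_n$ of mass in passing from $\gamma_{0,n}$ to $\gamma_{1,n}$, yields $0\le \gamma-\gamma_{1,n}$ with $\|\gamma-\gamma_{1,n}\|_{\mathrm{TV}}\le 3\varepsilon_n$, so $\|R_n\|_{\mathrm{TV}}\le 3\varepsilon_n\to 0$.

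Fix $\phi\in C_b(X^N)$ and write
\[
\int \phi\,d\gamma_n' - \int \phi\,d\gamma = \int \phi\,dR_n + \int \phi\,d(\gamma_{1,n}^a-\gamma_{1,n}) + \int \phi\,d(\gamma_{1,n}-\gamma).
\]
The first and last integrals are each bounded by $3\|\phi\|_\infty\varepsilon_n \to 0$. For the middle term, the equality of block masses combined with $\diam(B_n^{k_1}\times\cdots\times B_n^{k_N})\le \lambda_n$ in the sup-metric gives
\[
\left|\int \phi\,d(\gamma_{1,n}^a-\gamma_{1,n})\right| \le \sum_{(k_1,\ldots,k_N)} \mathrm{osc}_{B_n^{k_1}\times\cdots\times B_n^{k_N}}(\phi)\cdot \gamma_{1,n}(B_n^{k_1}\times\cdots\times B_n^{k_N}),
\]
since on each block both measures have equal mass $\gamma_{1,n}(B_n^{k_1}\times\cdots\times B_n^{k_N})$, so the constant parts of $\phi$ cancel and only the oscillation contributes.

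To make the last sum small, I would invoke the tightness of $\gamma$: given $\delta>0$, fix a compact $L\subset X^N$ with $\gamma(X^N\setminus L)<\delta$, and let $\omega_L$ be a modulus of continuity of $\phi$ on the $1$-neighbourhood $L^1$ of $L$ in the sup-metric. For $\lambda_n<1$, each block intersecting $L$ is contained in $L^1$, so $\mathrm{osc}_{\mathrm{block}}(\phi)\le \omega_L(\lambda_n)$ and the contribution of these blocks is at most $\omega_L(\lambda_n)\|\gamma_{1,n}\|\le \omega_L(\lambda_n)\to 0$. The blocks disjoint from $L$ lie in $X^N\setminus L$, whose $\gamma_{1,n}$-mass is at most $\gamma(X^N\setminus L)+\|\gamma-\gamma_{1,n}\|_{\mathrm{TV}}\le \delta+3\varepsilon_n$, so they contribute at most $2\|\phi\|_\infty(\delta+3\varepsilon_n)$. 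Sending $n\to \infty$ and then $\delta\to 0$ yields $\int \phi\,d\gamma_n' \to \int \phi\,d\gamma$ for every $\phi\in C_b(X^N)$, which is the claimed narrow convergence.

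The main technical obstacle is that a general $\phi\in C_b(X^N)$ need not be uniformly continuous on all of $X^N$, so the block oscillation bound cannot be applied with a single modulus; the tightness-plus-localisation step above is precisely what replaces global uniform continuity by uniform continuity on a compact neighbourhood $L^1$ chosen after $\phi$, with the tail handled by boundedness of $\phi$.
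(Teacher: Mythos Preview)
Your approach is essentially the paper's: decompose $\gamma_n'=\gamma_{1,n}^a+R_n$, show $\|R_n\|\le 3\varepsilon_n$ and $\|\gamma-\gamma_{1,n}\|_{\mathrm{TV}}\le 3\varepsilon_n$, and compare $\gamma_{1,n}^a$ with $\gamma_{1,n}$ using the block-mass equality together with a tightness/uniform-continuity argument. The mass bookkeeping and the oscillation inequality on each block are correct.

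There is, however, a real gap in the localisation step. You invoke a modulus of continuity $\omega_L$ for $\phi$ on the $1$-neighbourhood $L^1$ of a compact $L\subset X^N$ and even call $L^1$ a ``compact neighbourhood''. In a general Polish space the closed $1$-neighbourhood of a compact set need not be compact (take $L=\{0\}$ in $\ell^2$), so a bounded continuous $\phi$ need not be uniformly continuous on $L^1$, and $\omega_L$ may fail to exist. Your oscillation bound $\mathrm{osc}_{\mathrm{block}}(\phi)\le\omega_L(\lambda_n)$ for blocks intersecting $L$ is therefore unjustified as written.

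The paper circumvents this by taking $K\subset X$ compact with $\rho(X\setminus K)$ small (so $K^N$ is compact and $\phi$ is uniformly continuous there) and then splitting each block integral as $\int_{Q\cap K^N}+\int_{Q\setminus K^N}$. Uniform continuity is applied only on $K^N$; the new error term $|\gamma_{1,n}(Q\cap K^N)-\gamma_{1,n}^a(Q\cap K^N)|$, which appears because block-mass equality holds on $Q$ but not on $Q\cap K^N$, is bounded by $\gamma_{1,n}(Q\setminus K^N)+\gamma_{1,n}^a(Q\setminus K^N)$. Summing over all blocks, these tails are at most $2N\rho(X\setminus K)$ since both $\gamma_{1,n}$ and $\gamma_{1,n}^a$ have marginals $\rho_{1,n}\le\rho$. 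Replacing your $L^1$-step by this $K^N$-splitting repairs the argument without changing its structure.
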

\begin{proof}
 
Let $\varphi \in C_b(X^N)$ and $\varepsilon > 0$.
We need an index $N_0\in\N$ such that 
\begin{equation}\label{eq:close}
\left\lvert\int_{X^N}\varphi\d\gamma-\int_{X^N}\varphi d\gamma_n'\right\rvert<\varepsilon\text{ for all }n\ge N_0\,.
\end{equation}
Let us denote $M:=\sup_{x\in X^N}|\varphi(x)|$; we may assume that $M>0$. Since $\rho$ is inner regular, we can fix a compact set $K\subset X$ such that 
\[\rho(X\setminus K)<\frac{\varepsilon}{12NM}\,.\]
Since $\gamma\in \Pi_N(\rho)$, we now have 
\[\gamma(X^N\setminus K^N)<\frac{\varepsilon}{12M}\,.\]
The function $\varphi$, when restricted to $K^N$, is uniformly continuous. Hence there exists $\delta > 0$ so that 
\begin{equation}\label{eq:uniform}
|\varphi(x)-\varphi(y)| < \frac{\varepsilon}{12}\text{ for all }x,y\in K^N\text{ for which }d_N(x,y)<\delta\,.
\end{equation}
Now let $N_0 \in \N$ be so large that 
 
\[
  \sqrt N\lambda_n < \delta \qquad \text{and} \qquad 6M\varepsilon_n < \frac{\varepsilon}{6}~~~\text{for all }n\ge N_0\,. \]

Let us show that this choice of $N_0$ fulfills Condition (\ref{eq:close}). First we note that for all $n\ge N_0$ we have 
\begin{align}
\left\lvert \int_{X^N}\varphi\,\d\gamma-\int_{X^N}\varphi\,\d\gamma_n'\right\rvert 
&\le  \left\lvert\int_{X^N}\varphi\,\d\gamma-\int_{X^N}\varphi\,\d\gamma_{1,n}\right\rvert+\left\lvert\int_{X^N}\varphi\,\d\gamma_{1,n}-\int_{X^N}\varphi\,\d\gamma_n'\right\rvert\nonumber\\
&\le \left\lvert\int_{X^N}\varphi\,\d\gamma-\int_{X^N}\varphi\,\d\gamma_{1,n}\right\rvert+\left\lvert\int_{X^N}\varphi\,\d\gamma_{1,n}-\int_{X^N}\varphi\,\d\gamma_{1,n}^a\right\rvert\nonumber\\
&\quad+\left\lvert\int_{X^N}\varphi\,\d(\gamma_{2,n}^a+\gamma_{3,n}^a+\gamma_{4,n}^a)\right\rvert\nonumber\\
&<\left\lvert\int_{X^N}\varphi\,\d\gamma_{1,n}-\int_{X^N}\varphi\,\d\gamma_{1,n}^a\right\rvert+\frac{\varepsilon}{6}\,,\label{eq:tailsestimated}\end{align}
where in the last inequality we have used the following facts: 
$\gamma(X^N)-\gamma_{1,n}(X^N)<3\varepsilon_n$ for all $n$, and 
for the remainder part of the measure $\gamma_n'$ we have  
\[
 (\gamma_{2,n}^a + \gamma_{3,n}^a + \gamma_{4,n}^a)(X^N) < 3\varepsilon_n~~~\text{ for all }n\in\N.\]

It remains to show that for all $n\ge N_0$ we have 
\begin{equation}\label{eq:core}
 \left|\int_{X^N} \varphi \,\d\gamma_{1,n} - \int_{X^N} \varphi \,\d\gamma_{1,n}^a \right| < \frac{5\varepsilon}{6}\,.
\end{equation}
We first estimate the integrals in the set $K^N$. 
Let us fix, for each $(k_1,\ldots, k_N)\in M_n^N$ for which the set
\[(B_n^{k_1}\times\cdots\times B_n^{k_N})\cap K^N\]
is nonempty, an element
\[z_{k_1,\ldots,k_N}\in(B_n^{k_1}\times\cdots\times B_n^{k_N})\cap K^N\,.\]
Now we have, for a fixed $(k_1,\ldots, k_N)$, denoting for simplicity
\[z_0:=z_{k_1,\ldots, k_N}, Q=B_n^{k_1}\times\cdots\times B_n^{k_N},~\gamma=\gamma_{1,n},~\gamma_a=\gamma_{1,n}^a\,,\]
\begin{align*}
\bigg\lvert\int_{Q\cap K^N}&\varphi(z)\,\d\gamma-\int_{Q\cap K^N}\varphi(z)\,\d\gamma_a\bigg\rvert\\
&\le\left\lvert\int_{Q\cap K^N}\varphi(z)\,\d\gamma-\varphi(z_0)\gamma(Q\cap K^N)\right\rvert
+\left\lvert\varphi(z_0)\gamma_a(Q\cap K^N)-\int_{Q\cap K^N}\varphi(z)\,\d\gamma_a\right\rvert\\
&\quad+\left\lvert\varphi(z_0)\gamma(Q\cap K^N)-\varphi(z_0)\gamma_a(Q\cap K^N)\right\rvert\\
&\le\int_{Q\cap K^N}|\varphi(z)-\varphi(z_0)|\,\d\gamma
+\int_{Q\cap K^N}|\varphi(z_0)-\varphi(z)|\,\d\gamma_a\\
&\quad+M|\gamma(Q\cap K^N)-\gamma_a(Q\cap K^N)|\\
&\overset{a)}{<}\gamma(Q\cap K^N)\cdot\frac{\varepsilon}{12}+\gamma_a(Q\cap K^N)\cdot\frac{\varepsilon}{12}
+M|\gamma(Q\cap K^N)-\gamma_a(Q\cap K^N)|\\
&=\gamma(Q\cap K^N)\cdot\frac{\varepsilon}{12}+\gamma_a(Q\cap K^N)\cdot\frac{\varepsilon}{12}\\
&\quad+M|\gamma(Q\cap K^N)-\gamma(Q)+\gamma_a(Q)-\gamma_a(Q\cap K^N)|\\
&\le \gamma(Q\cap K^N)\cdot\frac{\varepsilon}{12}+\gamma_a(Q\cap K^N)\cdot\frac{\varepsilon}{12}+M\gamma(Q\setminus K^N)+M\gamma_a(Q\setminus K^N)\,,
\end{align*}
where in a) we have used Condition (\ref{eq:uniform}), and in b) the fact that the measures $\gamma$ and $\gamma_a$ coincide on 'cubes' $Q$. 
Summing the estimate above over all cubes $Q=B_{k_1}\times\cdots\times B_{k_N}$, $(k_1,\ldots, k_N)\in M_n^N$, gives
\begin{align}
 \bigg|\int_{K^N} \varphi \,\d\gamma_{1,n} &- \int_{K^N} \varphi \,\d\gamma_{1,n}^a \bigg|\nonumber\\
&< \gamma_{1,n}(K^N)\cdot\frac{\varepsilon}{12}+\gamma_a(K^N)\cdot\frac{\varepsilon}{12}+M\gamma(X^N\setminus K^N)+M\gamma_a(X^N\setminus K^N)\nonumber\\
&\overset{a)}{<}\frac{\varepsilon}{12}+\frac{\varepsilon}{12}+\frac{\varepsilon}{12}+\frac{\varepsilon}{12}=\frac{\varepsilon}{3}\,.\label{eq:inKN}
\end{align}
In inequality a) we have used the fact that $\rho(X\setminus K)<\frac{\epsilon}{12MN}$ and, since the marginals of $\gamma_{1,n}$ and $\gamma_{1,n}^a$ are restrictions of $\rho$, we can bound both $\gamma_{1,n}(X^N\setminus K^N)$ and $\gamma_{1,n}^a(X^N\setminus K^N)$ by $\frac{\varepsilon}{12M}$. For the same reason, we have
\begin{equation}\label{eq:outofKN}
\left\lvert\int_{X^N\setminus K^N}\varphi\,\d\gamma_{1,n}-\int_{X^N\setminus K^N}\varphi\,\d\gamma_{1,n}^a\right\rvert <2M\cdot\frac{\varepsilon}{12M}=\frac{\varepsilon}{6}\,.
\end{equation}
Combining estimates (\ref{eq:inKN}) and (\ref{eq:outofKN}) gives
\[\left\lvert\int_{X^N}\varphi\,\d\gamma_{1,n}-\int_{X^N}\varphi\,\d\gamma_{1,n}^a\right\rvert<\frac{\varepsilon}{3}+\frac{\varepsilon}{6}=\frac{3\varepsilon}{6}<\frac{5\varepsilon}{6}\,,\]
proving Condition (\ref{eq:core}). 
\end{proof}

\subsection{Convergence of the cost functional}

 In order to prove the $\Gamma$-limsup inequality (II), we need the cost $C_0[\cdot]$ to converge along the approximating sequence $\gamma_n$. We prove this in the following lemma.

\begin{lemma}\label{lma:costconvergence}
We have $C_0[\gamma_n'] \to C_0[\gamma]$ as $n \to \infty$.
\end{lemma}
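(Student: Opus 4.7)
The plan is to split $C_0[\gamma_n']$ according to the four summands of $\gamma_n' = \gamma_{1,n}^a + \gamma_{2,n}^a + \gamma_{3,n}^a + \gamma_{4,n}^a$ and to estimate the contributions of the block-approximated core part and the three remainder parts separately. Throughout I assume, without loss of generality, that the compacts $K_n$ in \eqref{eq:largeCompact} are chosen increasing with $\bigcup_n K_n = X$.

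For the three remainder pieces I would use the distance lower bounds \eqref{eq:remainderoffdiagonal} and \eqref{eq:remainderoffdiagonal2}: every point in $\spt(\gamma_{j,n}^a)$ for $j\in\{2,3,4\}$ has all pairwise coordinate distances at least $2r_n/5$. Since $f$ is decreasing, $c\le\binom{N}{2}f(2r_n/5)$ on these supports; combining with the total remainder mass (of order $\varepsilon_n$) and the inequality $\varepsilon_n\le r_n/(Nf(2r_n/5))$ from \eqref{eq:epsilondefinition} shows $C_0[\gamma_{2,n}^a+\gamma_{3,n}^a+\gamma_{4,n}^a]=O(r_n)\to 0$.

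For the core part I would decompose
\[
C_0[\gamma_{1,n}^a] - C_0[\gamma] = \bigl(C_0[\gamma_{1,n}^a] - C_0[\gamma_{1,n}]\bigr) + \bigl(C_0[\gamma_{1,n}] - C_0[\gamma_{0,n}]\bigr) + \bigl(C_0[\gamma_{0,n}] - C_0[\gamma]\bigr),
\]
and handle the three terms in turn. The last term is controlled exactly as the remainder: $\gamma-\gamma_{0,n}$ has mass $2\varepsilon_n$ and is concentrated on $\tilde B_n \cup \tilde B_n'$ where \eqref{eq:farenough} forces pairwise distances above $4r_n/5$, so its contribution is $O(f(2r_n/5)\varepsilon_n)=O(r_n)$. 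The middle term equals $\int_{(X^N\setminus K_n^N)\cup D_{\delta_n}}c\,\d\gamma_{0,n}$; since $C_0[\gamma]<\infty$, both $c_+\gamma$ and $c_-\gamma$ are finite Borel measures on $X^N$ and the full diagonal is $\gamma$-null. Hence, using $\gamma_{0,n}\le\gamma$ and dominated convergence against $|c|\gamma$, this integral vanishes as $K_n\uparrow X$ and $\delta_n\to 0$. Finally, for the block approximation error I would exploit that $\gamma_{1,n}$ and $\gamma_{1,n}^a$ assign equal total mass to each cube $Q=B_n^{k_1}\times\cdots\times B_n^{k_N}$. For any cube with $\gamma_{1,n}(Q)>0$, fixing a reference $z\in Q\cap\spt(\gamma_{1,n})$, one has $d(z_i,z_j)\in[\delta_n,\diam(K_n)]$; any other $y\in Q$ satisfies $|d(y_i,y_j)-d(z_i,z_j)|\le 2\lambda_n$ and $d(y_i,y_j)\in[\delta_n/2,2\diam(K_n)]$. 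The choice \eqref{eq:lambdachoice} of $\lambda_n$ then forces $|c(y)-c(z)|<\binom{N}{2}\varepsilon_n$, so summing over cubes yields $|C_0[\gamma_{1,n}]-C_0[\gamma_{1,n}^a]|\le 2\binom{N}{2}\varepsilon_n\to 0$.

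The main obstacle is the singularity of $c$ on the diagonal, which prevents deducing convergence of the cost from the narrow convergence (Lemma \ref{lma:weakconvergence}) alone. My argument overcomes this by the quantitative calibration of the parameters $r_n,\varepsilon_n,\delta_n,\lambda_n$ together with the exhaustion $(K_n)$: the remainder parts are pushed uniformly off the diagonal at rate $r_n$, the truncated core $\gamma_{1,n}$ sits on the cost-regular region $K_n^N\setminus D_{\delta_n}$ where $f$ is uniformly continuous, and the block approximation error is made arbitrarily small using \eqref{eq:lambdachoice}.
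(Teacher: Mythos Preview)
Your proof is correct and follows essentially the same strategy as the paper's: split into the core block-approximation error (controlled via \eqref{eq:lambdachoice}), the remainder parts (controlled via \eqref{eq:remainderoffdiagonal}, \eqref{eq:remainderoffdiagonal2} and the choice \eqref{eq:epsilondefinition} of $\varepsilon_n$), and the truncation error $\gamma-\gamma_{1,n}$ (controlled by integrability of $c$). One minor technical point: in a general Polish space you cannot arrange $\bigcup_n K_n = X$, so the dominated-convergence justification for the middle term should be replaced by the absolute continuity of the integral of $|c|\in L^1(\gamma)$ (this is exactly what the paper's terse phrase ``continuity of the integral'' encodes), using that $\gamma\bigl((X^N\setminus K_n^N)\cup D_{\delta_n}\bigr)\to 0$.
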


\begin{proof}
 Let us first consider the remainder part. Recall that for all $n \in \N$ we have
 \[
  (\gamma_n'-\gamma_{1,n}^a)(X^N) =  (\gamma_{2,n}^a + \gamma_{3,n}^a + \gamma_{4,n}^a)(X^N) < 3\varepsilon_n.
 \]
 Thus, using the lower bounds \eqref{eq:remainderoffdiagonal} and \eqref{eq:remainderoffdiagonal2} for distances in the support of the remainder part, and the definition \eqref{eq:epsilondefinition} of $\varepsilon_n$, we get
 \begin{equation}\label{eq:remaindercost}
  \int_{X^N} c\,\d (\gamma_n'-\gamma_{1,n}^a) \le \frac{N(N-1)}{2}f\left(\frac{2r_n}{5}\right) 3\varepsilon_n \le \frac{3N(N-1)}{2}r_n \to 0
 \end{equation}
 as $n \to \infty$.
 By continuity of the integral, we get
 \begin{equation}\label{eq:remaindercostorginalpart}
  \int_{X^N} c\,\d (\gamma -\gamma_{1,n}) \to 0
 \end{equation}
 as $n \to \infty$.
 
 Let us now estimate the core part of the approximation. By the construction \eqref{gammaconv:gamma2na} of $\gamma_{1,n}^a$
 and the choice \eqref{eq:lambdachoice} of $\lambda_n$, we have
 \begin{equation}\label{eq:corecost}
  \left| \int_{X^N} c\,\d\gamma_{1,n}^a - \int_{X^N} c\,\d\gamma_{1,n}\right| \le 
   \int_{X^N} \frac{N(N-1)}{2} \varepsilon_n\,\d\gamma_{1,n} < \frac{N(N-1)}{2} \varepsilon_n.
 \end{equation}

 Combining the above estimate with \eqref{eq:remaindercost}, \eqref{eq:remaindercostorginalpart} and \eqref{eq:corecost} we get
 \begin{align*}
  \left|C_0[\gamma_n'] - C_0[\gamma] \right| \le & \left| \int_{X^N} c\,\d\gamma_{1,n}^a - \int_{X^N} c\,\d\gamma_{1,n}\right | + 
  \left|\int_{X^N} c\,\d (\gamma_n'-\gamma_{1,n}^a) \right|\\
  & + \left|\int_{X^N} c\,\d (\gamma -\gamma_{1,n}) \right| \to 0
 \end{align*}
 as $n \to \infty$.
\end{proof}

\subsection{Finiteness of the entropy for the approximations}

Next we show that the entropy is finite for the approximating sequence. Notice that, in order to prove (II), we do not need a better estimate on the entropy.

\begin{lemma}\label{lma:finiteentropy}
 For each $n\in \N$ we have $E[\gamma_n'] < \infty$.
\end{lemma}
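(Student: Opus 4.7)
The plan is to derive a pointwise upper bound on the density of $\gamma_n'$ with respect to $\mm_N$ and then exploit the hypothesis $\rho\log\rho\in L^1_\mm(X)$. A lower bound on the entropy is for free: since $\gamma_n'\in\Pi^{\mathrm{sym}}_N(\rho)$, Proposition~\ref{ent:min} already yields $E[\gamma_n']\ge N\int_X\rho\log\rho\,\d\mm>-\infty$, so it suffices to control the positive part $(\rho_{\gamma_n'}\log\rho_{\gamma_n'})^+$.

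First I would observe that each of the four pieces $\gamma_{1,n}^a$, $\gamma_{2,n}^a$, $\gamma_{3,n}^a$, $\gamma_{4,n}^a$ is a finite linear combination, possibly after symmetrization, of tensor products whose factors are rescalings of restrictions of measures absolutely continuous with respect to $\mm$, each with density bounded by a constant multiple of $\rho$. For $\gamma_{1,n}^a$ this is immediate from \eqref{gammaconv:gamma2na}: the constants $c_{\vec k}:=\gamma_{1,n}(B_n^{k_1}\times\cdots\times B_n^{k_N})/\prod_i\rho_{1,n}(B_n^{k_i})$ are finite since each $\rho_{1,n}(B_n^{k_i})$ is strictly positive by construction of the partition, while $\rho_{1,n}\le\rho\mm$ because $\gamma_{1,n}\le\gamma$. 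For the three remainder pieces one uses that $\rho_{B_n}(B(x_k,r_n/10))=1/N$ (a direct consequence of the definition of $\rho_{B_n}$ as the marginal of the symmetrization of $\gamma|_{B_n}/\gamma(B_n)$, together with \eqref{eq:farenough} which makes the balls $B(x_k,r_n/10)$ pairwise disjoint), so all normalizing denominators are positive and finite, and the densities of the underlying factors $\rho_{B_n}$, $\rho_{2,n}$, $\rho_{3,n}$ are again dominated by constant multiples of $\rho$. Because $\prod_i\rho(y_i)$ is invariant under coordinate permutations, symmetrization preserves this pointwise domination, so there exists a finite constant $C_n>0$ with
\begin{equation*}
\rho_{\gamma_n'}(y_1,\ldots,y_N)\le C_n\,\rho(y_1)\cdots\rho(y_N)\qquad\text{for }\mm_N\text{-a.e. }(y_1,\ldots,y_N)\in X^N.
\end{equation*}

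Setting $g=\rho_{\gamma_n'}$ and $f(y_1,\ldots,y_N)=C_n\prod_i\rho(y_i)$, the map $t\mapsto t\log t$ is nonpositive on $[0,1]$ and nondecreasing on $[1,\infty)$, so $g\le f$ yields $(g\log g)^+\le(f\log f)^+\le|f\log f|$ pointwise. Using $|\log f|\le|\log C_n|+\sum_i|\log\rho(y_i)|$ and Fubini's theorem together with $\int_X\rho\,\d\mm=1$ and $\int_X\rho|\log\rho|\,\d\mm<\infty$, I get
\begin{equation*}
\int_{X^N}|f\log f|\,\d\mm_N\le C_n|\log C_n|+NC_n\int_X\rho|\log\rho|\,\d\mm<\infty,
\end{equation*}
hence $\int_{X^N}(g\log g)^+\,\d\mm_N<\infty$, which combined with the lower bound from Proposition~\ref{ent:min} proves $E[\gamma_n']<\infty$. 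The main piece of bookkeeping, and the only place where care is needed, is verifying that every normalizing denominator appearing in the construction of $\gamma_n'$ is strictly positive, so that the constant $C_n$ is indeed finite.
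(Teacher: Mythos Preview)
Your proof is correct and follows essentially the same approach as the paper: both observe that $\gamma_n'$ is a finite combination of product measures whose one-dimensional factors are dominated by constant multiples of $\rho$, and then combine this structural fact with the lower bound from Proposition~\ref{ent:min} and the hypothesis $\rho\log\rho\in L^1_\mm(X)$. Your version is more explicit about the pointwise density bound $\rho_{\gamma_n'}\le C_n\prod_i\rho(y_i)$ and the control of $(g\log g)^+$, while the paper argues more tersely by splitting the entropy over the finitely many product pieces.
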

\begin{proof}
 In order to see the finiteness of the entropy, it suffices to notice that each $\gamma_n'$ is a sum of finitely many measures $(\tilde\gamma_{n,k})_{k=1}^{N_n}$ each of which
 is of the form $\tilde\gamma_{n,k} = \tilde\rho_1^k\mm \otimes \cdots \otimes \tilde\rho_N^k\mm$ with $\tilde\rho_i^k \ll \rho$ and $\frac{\d\tilde\rho_i^k}{\d\rho} \le 1$.
Indeed, by Proposition \ref{ent:min}, the entropy is always bounded from below, and so we can make a crude estimate:
 \begin{align*}
  E[\gamma_n'] &= \int_{X^N} \log\left(\sum_{k=1}^{N_n}\frac{\d\tilde\gamma_{n,k}}{\d\mm}\right)\,\d\left(\sum_{k=1}^{N_n} \tilde\gamma_{n,k}\right) \\
  &\le \log(N_n) + \sum_{k=1}^{N_n}
  \int_{X^N}\log\left(\frac{\d\tilde\gamma_{n,k}}{\d\mm}\right)\,\d \tilde\gamma_{n,k} < \infty.\qedhere
 \end{align*}
 \end{proof}

\subsection{Proof of condition (II)}

We are now ready to prove the $\Gamma$-$\limsup$ inequality (II). By Lemma \ref{lma:weakconvergence} we already know that $(\gamma_n')_n$ converges to $\gamma$.
However, $\mathcal C_n[\gamma_n']$ need not converge to $\mathcal C[\gamma]$. This can be solved by making the convergence of $(\gamma_n')_n$ slower by repeating always the same measure for sufficiently (but finitely) many times before moving to the next one. We define $k(n)$ for every $n \in \N$ as 
\[
k(n) = \min\left(n,\max\left(1, \sup\left\{k\in \N\,|\,\sqrt{\tau_n} E[\gamma_j'] < 1 \text{ for all }j \le k\right\}\right)\right).
\]
By definition, $1 \le k(n) \le n$. Moreover, since for every $j\in\N$ we have $E[\gamma_j'] < \infty$ by Lemma \ref{lma:finiteentropy} and  $\tau_n \to 0$ by definition, we have that $k(n) \to \infty$ as $n\to \infty$. Thus, defining $\gamma_n = \gamma_{k(n)}'$, for large enough $n\in \N$ we have
\[
\mathcal C_n[\gamma_n] = C_0[\gamma_{k(n)}'] + \tau_nE[\gamma_{k(n)}'] < C_0[\gamma_{k(n)}'] + \sqrt{\tau_n}.
\]
Recalling that by Lemma \ref{lma:costconvergence} we have $C_0[\gamma_{k(n)}'] \to C_0[\gamma]$, we conclude the proof.

In Proposition (\ref{prop:minexists}) the existence of a minimizer for the entropy-regularized cost was established. Now that we know that measures $\gamma$ for which $C_0(\gamma)<\infty$ can be approximated by measures with not only finite costs but also finite entropy, we can say more: 
\begin{corollary}
Let $(X,d,\mm)$ be a Polish metric measure space. Assume that $\rho \mm\in \Prac(X)$ satisfies $\rho\log\rho \in L_\mm^1(X)$ and Conditions (A) and (B). Assume that $c\colon X^N\to\R\cup\lbrace +\infty\rbrace$ satisfies  Conditions $(F1)$ and $(F2)$. Then, for each $\varepsilon> 0$, there exists a unique minimizer $\gamma\in \Pi^{\mathrm{sym}}_N(\rho)$ for the entropic-regularized cost $C_{\varepsilon}[\gamma]$, and this minimizer has a finite cost.
\end{corollary}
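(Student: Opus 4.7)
The plan is twofold: first, to exhibit a competitor with finite $C_\varepsilon$-value so that any minimizer inherits finite cost and finite entropy, and second, to deduce uniqueness from the strict convexity of the entropy. Existence has already been obtained in Proposition \ref{prop:minexists}, so nothing new is required there.

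To establish finiteness of the cost at the minimizer, I would start from a minimizer $\gamma_0$ of $I_0[\rho]$ provided by Proposition \ref{prop:minexists} with $\varepsilon = 0$, which satisfies $C_0[\gamma_0] < \infty$, and apply the block-approximation construction of Section \ref{sec:blockapproximation} to $\gamma_0$ to produce a sequence $(\gamma_n')_n \subset \Pi^{\mathrm{sym}}_N(\rho)$ with $C_0[\gamma_n'] \to C_0[\gamma_0]$ by Lemma \ref{lma:costconvergence} and $E[\gamma_n'] < \infty$ for every $n$ by Lemma \ref{lma:finiteentropy}. Then $C_\varepsilon[\gamma_n'] = C_0[\gamma_n'] + \varepsilon E[\gamma_n'] < \infty$ for each $n$, hence $I_\varepsilon[\rho] \le C_\varepsilon[\gamma_n'] < \infty$. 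Consequently any minimizer $\gamma_\varepsilon$ satisfies $C_\varepsilon[\gamma_\varepsilon] < \infty$; together with the entropy lower bound from Proposition \ref{ent:min} and the assumption $\varepsilon > 0$, this forces $C_0[\gamma_\varepsilon] < \infty$ and $E[\gamma_\varepsilon] < \infty$ simultaneously.

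For uniqueness, the key observation is that the real function $t \mapsto t \log t$ is strictly convex on $[0,\infty)$, so the entropy $\gamma \mapsto E[\gamma]$ is strictly convex on its domain of finiteness $\{\gamma : \gamma \ll \mm_N\}$, viewed as a function of the Radon--Nikodym density. Since $\gamma \mapsto C_0[\gamma]$ is linear and $\Pi^{\mathrm{sym}}_N(\rho)$ is convex, the functional $C_\varepsilon = C_0 + \varepsilon E$ is strictly convex on $\{\gamma \in \Pi^{\mathrm{sym}}_N(\rho) : E[\gamma] < \infty\}$ for every $\varepsilon > 0$. If $\gamma_\varepsilon \ne \gamma_\varepsilon'$ were two minimizers, both would lie in this set by the previous paragraph, so their midpoint $\tfrac{1}{2}(\gamma_\varepsilon + \gamma_\varepsilon') \in \Pi^{\mathrm{sym}}_N(\rho)$ would yield a strictly smaller $C_\varepsilon$-value, contradicting minimality. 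The only non-automatic ingredient is ensuring that competing minimizers sit in the finite-entropy regime where strict convexity bites, and that is exactly what the block approximation of Section \ref{sec:blockapproximation} delivers; the rest is then immediate.
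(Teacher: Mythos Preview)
Your proof is correct and follows essentially the same route as the paper: use the block approximation of Section \ref{sec:blockapproximation} applied to a $C_0$-minimizer to produce a competitor with finite $C_\varepsilon$, then invoke strict convexity of $C_\varepsilon$ for $\varepsilon>0$. Your version is in fact slightly more careful than the paper's, since you make explicit that finiteness of $I_\varepsilon[\rho]$ forces any minimizer into the finite-entropy regime where strict convexity actually applies.
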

\begin{proof}
Our marginal measure satisfies Conditions (A) and (B), so there exists a measure $\gamma\in\Pi^{\mathrm{sym}}_N(\rho)$ that minimizes $C_0$ with $C_0[\gamma]<\infty$. It must be noted that this measure can have infinite entropy. However, because of the approximation result presented in the proof of Condition (II) above, we get the existence of a measure $\gamma'\in \Pi^{\mathrm{sym}}_N(\rho)$ such that $C_\epsilon[\gamma']<\infty$. The uniqueness claim now follows, since the functional $\gamma\mapsto C_\epsilon[\gamma]$ is strictly convex for $\epsilon>0$. 
\end{proof}

\section{Entropic-Kantorovich Duality for Coulomb-type costs}\label{sec:duality}

We start by recalling the classical Fenchel-Rockafellar Theorem and we refer to the I. Ekeland and R. T\' emam's book \cite[Theorem 4.2]{EkeTem} for a more complete presentation and references.

\begin{theorem}[Fenchel-Rockafellar]\label{dual:FenRocthm}
Let $\XX$ and $\YY$ be Banach spaces and $A:\XX \to \YY$ be linear and continuous. Let $F:\XX\to \R\cup \lbrace +\infty\rbrace$ and $G:\YY\to\R\cup\lbrace +\infty \rbrace$ be proper and convex functions. Then
$$ \inf \big\lbrace F[x] + G[Ax] ~\big|~ x \in \XX\big\rbrace  = \sup \big\lbrace -F^{*}[-A^* \gamma] - G^{*}[\gamma] ~\big|~ \gamma\in \YY^* \big\rbrace $$
where $A^*:\YY^*\to \XX^{*}$ denotes the adjoint operator of A.
\end{theorem}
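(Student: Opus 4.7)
The inequality $\inf \ge \sup$ (weak duality) is immediate from the Fenchel--Young inequality applied separately to $F$ and $G$. For every $x\in\XX$ and $\gamma\in\YY^{*}$,
\[
F[x] + G[Ax] \ge \bigl(\langle -A^{*}\gamma, x\rangle - F^{*}[-A^{*}\gamma]\bigr) + \bigl(\langle \gamma, Ax\rangle - G^{*}[\gamma]\bigr) = -F^{*}[-A^{*}\gamma] - G^{*}[\gamma],
\]
since $\langle -A^{*}\gamma, x\rangle + \langle \gamma, Ax\rangle = 0$ by the very definition of the adjoint. Taking the infimum in $x$ and the supremum in $\gamma$ yields one direction of the stated equality.

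For the reverse inequality I would reduce the problem to Fenchel--Moreau biconjugation via the perturbation (value) function
\[
v\colon \YY\to\R\cup\{\pm\infty\}, \qquad v(y):=\inf_{x\in\XX}\bigl(F[x]+G[Ax+y]\bigr).
\]
Three routine checks are needed: (i) $v$ is convex, being the marginal of the jointly convex map $(x,y)\mapsto F[x]+G[Ax+y]$; (ii) the left-hand side of the stated identity equals $v(0)$; (iii) by exchanging the two suprema in $\sup_y(\langle\gamma,y\rangle-v(y))$ and substituting $y\mapsto y-Ax$ one obtains the decoupling
\[
v^{*}(\gamma) = \sup_{x,y}\bigl(\langle \gamma, y\rangle - F[x] - G[Ax+y]\bigr) = F^{*}[-A^{*}\gamma] + G^{*}[\gamma].
\]
Consequently the right-hand side of the stated identity equals $-\inf_{\gamma}v^{*}(\gamma) = v^{**}(0)$, and the whole theorem reduces to the single equality $v(0) = v^{**}(0)$.

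The genuinely hard step is precisely this last one: without some mild constraint qualification $v$ can fail to be lower semicontinuous at the origin and a strictly positive duality gap may appear. Following Ekeland--T\'emam \cite[Thm.\ 4.2]{EkeTem}, I would therefore assume, as is implicit in the statement, that there exists some $x_0 \in \XX$ with $F[x_0] < \infty$, $G[Ax_0] < \infty$, and $G$ continuous at $Ax_0$. This hypothesis forces $v$ to be finite and bounded above on a neighborhood of $0$, hence proper and lower semicontinuous there; a Hahn--Banach separation of the (now closed convex) epigraph of $v$ from any point $(0,t)$ with $t < v(0)$ then produces the sought continuous linear functional $\gamma \in \YY^{*}$ realizing $v^{**}(0) \ge v(0)$. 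The delicate technical point to monitor is the transfer of the continuity of $G$ at $Ax_0$ to local boundedness of $v$ near $0$ in the infinite-dimensional Banach setting; once that is in place, the separation argument in $\YY \times \R$ is standard.
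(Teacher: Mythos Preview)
The paper does not prove this theorem: it is merely recalled and attributed to Ekeland--T\'emam \cite[Theorem 4.2]{EkeTem}, so there is no in-paper argument to compare against. Your proposal is essentially the standard perturbation-function proof found in that reference, and the outline is correct: weak duality via Fenchel--Young, the value function $v(y)=\inf_x(F[x]+G[Ax+y])$, the computation $v^{*}(\gamma)=F^{*}[-A^{*}\gamma]+G^{*}[\gamma]$, and the reduction of strong duality to $v(0)=v^{**}(0)$.

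You are also right to flag that the theorem \emph{as stated in the paper} is missing a hypothesis: with only ``$F,G$ proper convex'' a duality gap can occur, and the equality need not hold. The qualification you supply (existence of $x_0$ with $F[x_0]<\infty$ and $G$ continuous at $Ax_0$) is exactly the one in Ekeland--T\'emam, and your use of it is correct. One small point of phrasing: ``bounded above on a neighborhood, hence lower semicontinuous'' should be routed through the standard fact that a convex function bounded above on an open set is \emph{continuous} there; that is what gives lsc at $0$ and then allows either Fenchel--Moreau or the Hahn--Banach separation you describe.
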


Next we prove the Entropic-Kantorovich duality for the problem \eqref{ent:min}.

\begin{theorem}[Entropic Duality for repulsive costs]\label{kanto:dualthm}
Let $(X,d,\mm)$ be a Polish measure space. Suppose $\rho\mm\in \mathcal{P}(X)$ such that $(A)$ and $(B)$ hold and $\rho\log\rho\in L_m^1(X)$, and $c\colon X^N\to[0,\infty]$ is a cost function 
\[
c(x_1,\ldots, x_N)=\sum_{1\le i<j\le N}f(d(x_i,x_j)),\quad \text{ for all } \, (x_1,\ldots,x_N)\in X^N,
\]
where $f\colon [0,+\infty[\to [0,\infty]$ is a function satisfying $(F1)$ and $(F2)$. Then, for $\varepsilon> 0$, the duality holds 
\begin{align*}
\min_{\gamma\in\Pi(\rho)}C_\varepsilon[\gamma]&=
\sup_{u_i\in C_b(X)}\left\{\sum_{i=1}^N\int_Xu_i\d\rho\mm-\varepsilon\int_{X^N}\exp\left(\frac{u_1\oplus\cdots\oplus u_N-c}{\varepsilon}\right)\,\d \mm_{N}\right\}+\varepsilon\\
&=\sup_{u\in C_b(X)}\left\{N\int_Xu\d\rho\mm-\varepsilon\int_{X^N}\exp\left(\frac{u\oplus\cdots\oplus u-c}{\varepsilon}\right)\,\d \mm_{N}\right\}+\varepsilon\,,
\end{align*}
where $v_1\oplus\dots \oplus v_N$ denotes the operator $(v_1\oplus\dots \oplus v_N)(x_1,\dots, x_N) = v_1(x_1)+ \dots + v_N(x_N)$.

\end{theorem}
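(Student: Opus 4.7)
The plan is to apply the Fenchel--Rockafellar theorem (Theorem \ref{dual:FenRocthm}), first in the compact case, and then to remove compactness by invoking Lemma \ref{lm:changeofreference} together with an inner regularity argument. Assume first that $X$ is compact. I would take $\XX = C(X)^N$ and $\YY = C(X^N)$ with sup norms, let $A\colon \XX \to \YY$ be the continuous linear map $A(u_1,\ldots,u_N) := u_1 \oplus \cdots \oplus u_N$, and set
\[
F(u_1,\ldots,u_N) := -\sum_{i=1}^N \int_X u_i \, \d\rho\mm,
\qquad
G(\phi) := \varepsilon \int_{X^N} \exp\!\big(\tfrac{\phi - c}{\varepsilon}\big)\,\d\mm_N.
\]
Then $F$ is linear and continuous, hence proper and convex; $G$ is convex (composition of the convex exponential with an affine map, then integrated), real-valued (using $c \ge 0$ and $\mm_N(X^N) < \infty$), and continuous at $0$ by dominated convergence, so the hypotheses of Fenchel--Rockafellar are met.

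Next I would compute the Legendre transforms and the adjoint. A direct calculation gives $A^*\gamma = ((\mathtt{pr}^i)_\sharp \gamma)_{i=1}^N$ for $\gamma \in \YY^* = \mathcal{M}(X^N)$, and $F^*(\mu_1,\ldots,\mu_N) = 0$ if $\mu_i = -\rho\mm$ for every $i$ and $+\infty$ otherwise. Hence $-F^*(-A^*\gamma)$ precisely encodes the marginal constraint $\gamma \in \Pi_N(\rho)$. For $G^*$, a pointwise Legendre transform of $t \mapsto \varepsilon e^{(t-c(x))/\varepsilon}$ (whose conjugate at $s \geq 0$ is $sc(x) + \varepsilon s \log s - \varepsilon s$), together with the standard interchange theorem for convex integral functionals, yields $G^*(\gamma) = \int c\,\d\gamma + \varepsilon E[\gamma] - \varepsilon \gamma(X^N)$ when $\gamma$ is a non-negative measure absolutely continuous with respect to $\mm_N$, and $+\infty$ otherwise. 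On the feasible set $\Pi_N(\rho)$ we have $\gamma(X^N) = 1$, so $G^*(\gamma) = C_\varepsilon[\gamma] - \varepsilon$. Plugging into Fenchel--Rockafellar and rearranging the sign gives the first claimed identity in the compact case.

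The second, symmetric identity is a separate convexity argument. The inequality $\le$ is immediate by taking $u_1 = \cdots = u_N = u$. For the reverse, given any continuous potentials $u_1,\ldots,u_N$, set $\bar u := \tfrac{1}{N}\sum_i u_i$; using the identity $\sum_i \bar u(x_i) = \tfrac{1}{N!}\sum_{\sigma \in \mathcal S_N}\sum_i u_{\sigma(i)}(x_i)$, applying Jensen's inequality to $\exp$, and then a permutation change of variables (which preserves both $c$ and $\mm_N$ by symmetry), one concludes
\[
\int \exp\!\big(\tfrac{\bar u \oplus \cdots \oplus \bar u - c}{\varepsilon}\big)\,\d\mm_N \le \int \exp\!\big(\tfrac{u_1 \oplus \cdots \oplus u_N - c}{\varepsilon}\big)\,\d\mm_N,
\]
while the linear term $\sum_i \int u_i\,\d\rho\mm$ is unchanged under replacing each $u_i$ by $\bar u$. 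Thus symmetrizing only increases the dual functional and the two supremums agree.

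Finally, to pass from compact to general Polish $X$, I would invoke Lemma \ref{lm:changeofreference} to shift the reference measure from $\mm$ to the probability measure $\rho\mm$, modifying the identity only by the explicit constant $N\varepsilon\int \rho\log\rho\,\d\mm$. Inner regularity of $\rho\mm$ then supplies a compact exhaustion $K_n \uparrow X$, on each of which the compact arguments above apply to the restricted and renormalized marginal. One then passes to the limit $n \to \infty$: $\Gamma$-convergence-type arguments along the lines of Theorem \ref{gammaconv:maintheorem} control the primal side, while extending dual potentials from $K_n$ to $X$ by a smooth cutoff (or by sending them to $-\infty$ so that the exponential integrand vanishes outside $K_n$) and monotone convergence control the dual side. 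I expect this final limiting step to be the main obstacle: matching both sides of the identity uniformly along the exhaustion while keeping the dual potentials in $C_b(X)$ requires careful estimates, though the argument of \cite{DMaGer19} indicated by the authors should transfer directly.
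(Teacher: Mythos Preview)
Your approach is essentially identical to the paper's: the same choice of $F$, $G$, and $A$ in the compact case, the same computation of $F^*$ and $G^*$, the same symmetrization via $\bar u = \tfrac{1}{N}\sum_i u_i$ for the second identity (the paper states this tersely in the remark following the theorem), and the same reduction of the noncompact case via Lemma~\ref{lm:changeofreference} together with a compact exhaustion coming from inner regularity of the probability reference measure.

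The only notable difference is in the limiting step for the noncompact case. The paper does not extend dual potentials from $K_n$ to $X$; instead it argues directly, by absolute continuity of the integral and continuity of $t\mapsto t\log t$, that both the primal values $C_\varepsilon(\gamma_n^{\min}\mid \rho_n)$ and the dual suprema $\sup_{u\in C_b(K_n)} D_{\rho_n}(u)$ converge to their counterparts on $X$. Your proposed alternative of ``sending potentials to $-\infty$ off $K_n$'' would take you outside $C_b(X)$, and a genuine cutoff argument would require extra care to keep the exponential term under control; the paper's continuity-of-the-integral route sidesteps this and is the cleaner way to close the argument.
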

\begin{proof}
First let us assume that $X$ is a compact space. We denote by $\XX = (C_b(X))^N$ and $\YY=C_b(X^N)$, where $C_b(X)$ is the space of continuous and bounded functions on $X$, and similarly for $X^N$. 
By Riesz representation theorem, the space $\YY$ is dual to the space $\MM(X^N)$ of signed regular Borel measures on $X^N$. 
Thus we may define the Legendre-Fenchel transform $G^{*}$ of a functional $G\colon \YY\to \R\cup\lbrace +\infty\rbrace$ by 

\[
G^{*}\colon\MM(X^N)\to\R\cup\lbrace +\infty\rbrace, \quad G^{*}[\pi] = \sup_{\psi \in C_b(X^N)} \bigg\lbrace \int_{X^N}\psi \,\d\pi - G[\psi] \bigg\rbrace.
\]

We apply Fenchel-Rockafellar Theorem \ref{dual:FenRocthm} to the functionals

\[
F\colon (C_b(X))^N\to\R\cup\{+\infty\}\,,~~(u_1,\ldots, u_N)\mapsto -\sum_{i=1}^N\int_Xu_i\,\d\rho\mm
\]
and
\[
G\colon C_b(X^N)\to\R\cup\{+\infty\}\,,~~\psi\mapsto \varepsilon\int_{X^N}e^{\tfrac{1}{\varepsilon}(\psi-c)}\,\d \mm_{N}\,, 
\]
and to the operator
\[A\colon (C_b(X))^N\to C_b(X^N)\,,~~(u_1,\ldots, u_N)\mapsto u_1\oplus\cdots\oplus u_N\,.\]

Now $F$ and $G$ are proper and convex functionals and $A$ is a linear and continuous operator. So we may apply Fenchel-Rockafellar duality to get 
\[\inf\{F[x]+G[Ax]~|~x\in \XX\}=\sup\{-G^*[\gamma]-F^*[-A^\dag \gamma]~|~\gamma\in \YY^*\}\,.\]
This gives (since for every set $S$ we have $\inf(S)=-\sup(-S)$ and $\sup S=-\inf (-S)$) 
\[\inf\{G^*[\gamma]+F^*[-A^\dag \gamma]~|~\gamma\in \YY^*\}=\sup\{-F[x]-G[Ax]~|~x\in \XX\}\,.\]
It remains to show that the above expression has exactly the form of our duality claim. The claim that the right-hand sides correspond to each other follows immediately from our choices of $\XX$, $F$, and $G$.  So it remains to show that 
\be\label{eq:frclaim}
\inf\{G^*[\gamma]+F^*[-A^\dag \gamma]~|~\gamma\in \YY^*\}=C_\varepsilon[\gamma]-\varepsilon\,.
\ee

To prove it, let $\gamma \in \MM(X^N)$. Now we have
\begin{align*}
F^*[-A^*\gamma] &= \sup \bigg\lbrace \int_{X^N}\sum^N_{i=1}u_i(x_i)d\gamma - \sum^N_{i=1}\int_{X}u_i(x_i)\,\d\rho\mm(x_i) ~\bigg|~ (u_1,\dots,u_N)\in C_b(X)^N \bigg\rbrace \\
&=\begin{cases}0 &\text{ if }\gamma\in \Pi_N(\rho)\\
+\infty&\text{ otherwise}\end{cases} \,.
\end{align*}
\noindent
Let us then compute $G^*[\gamma]$:  
\[G^*[\gamma]=\sup_{\psi \in C_b(X^N)} \left\{ \int_{X^N}\psi \,\d\gamma - \varepsilon\int_{X^N}e^{\tfrac{1}{\varepsilon}(\psi-c)}\,\d \mm_{N} \right\}\,.\]
If $\gamma$ is not absolutely continuous with respect to $\mm_{N}$ we have $G^*[\gamma]=+\infty$. If $\gamma\ll \mm_{N}$, 
then the supremum (that appears in the definition of $G^{*}[\gamma]$) is realized at $\psi=\varepsilon\log\rho_\gamma+c$; this holds also if the function $\rho_\gamma$ is not continuous since it can be approximated by a sequence of continuous functions. Thus we get for $\gamma \ll \mm_{N}$
\begin{align*}
G^*[\gamma]&=\int_{X^N}\left(\rho_\gamma\cdot\psi -\varepsilon e^{\frac{1}{\varepsilon}(\psi-c)}\right)\,\d \mm_{N}\\
&=\int_{X^N}(\varepsilon\rho_\gamma\log\rho_\gamma+c\rho_\gamma-\varepsilon\rho_\gamma)\,\d \mm_{N}\,.
\end{align*}
Hence, if $\gamma\in \Pi_N(\rho)$, we have
\[G^*[\gamma]=C_0[\gamma]+\varepsilon E[\gamma]-\varepsilon\,.\]
This concludes the duality proof when $X$ is a compact space.\\

\noindent
\textbf{The noncompact case: }Due to Lemma \ref{lm:changeofreference}, it suffices to prove the claim in the case where the reference measure is $\rho\mm$ instead of $\mm$; the finiteness of the measure $\rho\mm$ now gives access to inner regularity and to the approximability by compact sets. We will for simplicity denote $\rho :=\rho\mm$. \\\\ The claim is 
\[\min_{\gamma\in \Pi_N(\rho)}C_\varepsilon[\gamma]=\sup_{u\in C_b(X)}\left\{N\int_Xu\d\rho-\varepsilon\int_{X^N}\exp\left(\frac{u\oplus\cdots\oplus u-c}{\varepsilon}\right)\,\d \rho^{\otimes N}\right\}+\varepsilon\,.\]

For simplicity, let us denote
\[D_\rho(u):=\left\{N\int_Xu\d\rho-\varepsilon\int_{X^N}\exp\left(\frac{u\oplus\cdots\oplus u-c}{\varepsilon}\right)\,\d \rho^{\otimes N}\right\}+\varepsilon~~~\text{for all  }u\in C_b(X)\,.\]
We may assume that $\sup_{u\in C_b(X)}D_\rho(u)>-\infty$; indeed, since we can test with  the function $u\equiv 0$, this always holds for cost functions that are bounded from below.

Let us make, in the notation of the primal functional, the dependence on the reference measure explicit by the notation $\gamma\mapsto C_\epsilon(\gamma |\mu)$ when the  reference measure on the space $X$ is $\mu$. Thus the original notation $\gamma\mapsto C_\epsilon[\gamma]$ corresponds to $\gamma\mapsto C_\epsilon(\gamma |\mm)$. 

Since the measures $\rho$ and $\gamma$ are inner regular, there exists a sequence $(K_n)_{n\in\N}$ of compact subsets of $X$ such that 
\[\rho(K_n)\to \rho(X)\qquad\text{and}\qquad\gamma(K_n^N)\to \gamma(X^N)\,.\]
Let us denote $\gamma_n:=\frac{1}{\gamma(K_n^N)}\gamma\restr{K_n^N}$ and $\rho_n:=\frac{1}{\rho(K_n)}\rho\restr{K_n}$. Let us also denote by $\gamma_n^{\text{min}}$ the minimizer of the problem $I_\epsilon(\rho_n)$. Since $\gamma$ is the minimizer of the problem $I_\epsilon(\rho)$ and since (due to the absolute continuity of the integral and the continuity of the function $t\mapsto t\log t$) 
\begin{equation}\label{eq:abscontinuity}
\lim_{n\to\infty}|C_\epsilon(\gamma_n|\rho_n)- C_\epsilon(\gamma|\rho)|=0\,,
\end{equation}
we have 
\begin{equation}\label{eq:minapproaches}
\lim_{n\to\infty}|C_\epsilon(\gamma_n|\rho_n)-C_\epsilon(\gamma_n^{\text{min}}|\rho_n)|=0\,.
\end{equation}
By the duality result proven above for compact spaces, we have for all $n \in\N$
\[\sup_{u\in C_b(K_n)}D_{\rho_n}(u)=C_\epsilon(\gamma_n^{\text{min}}|\rho_n)\,.\]
Again, due to the absolute continuity of the integral, we have \[\lim_{n\to\infty}\left\lvert\sup_{u\in C_b(K_n)}D_{\rho_n}(u)-\sup_{u\in C_b(X)}D_\rho(u)\right\rvert=0\,.\]
Putting these conditions together, we get for all $n\in \N$
\begin{align*}
&\left|C_\epsilon(\gamma|\rho)-\sup_{u\in C_b(X)}D_\rho(u)\right|\\
&\le \left|C_\epsilon(\gamma|\rho)-C_\epsilon(\gamma_n|\rho_n)\right|+\left|C_\epsilon(\gamma_n|\rho_n)-C_\epsilon(\gamma_n^{\text{min}}|\rho_n)\right|\\
&\quad +\left|C_\epsilon(\gamma_n^{\text{min}}|\rho_n)-\sup_{u\in C_b(K_n)}D_{\rho_n}(u)\right|+\left|\sup_{u\in C_b(K_n)}D_{\rho_n}(u)-\sup_{u\in C_b(X)}D_{\rho}(u)\right|\,.
\end{align*}
The claim follows by letting $n\to\infty$. 
\end{proof}

\begin{remark}
Notice that we always have
\[
\sup_{u_i\in C_b(X)}\left\{\sum_{i=1}^N\int_Xu_i\,\d\rho-\varepsilon\int_{X^N}\exp\left(\frac{u_1\oplus\cdots\oplus u_N-c}{\varepsilon}\right)\,\d \mm_{N}\right\}+\varepsilon 
\]
\[
\geq \sup_{u\in C_b(X)}\left\{N\int_Xu\,\d\rho-\varepsilon\int_{X^N}\exp\left(\frac{u\oplus\cdots\oplus u-c}{\varepsilon}\right)\,\d \mm_{N}\right\}+\varepsilon. 
\]

We show that the equality actually holds by simply choosing a potential $u(x) = (u_1(x)+u_2(x)+\cdots + u_N(x))/N$.
\end{remark}

\bibliographystyle{siam}
\bibliography{refsAcademy}
\end{document}